\tikzstyle{point}=   [draw, black, fill,shape=circle, minimum size=4pt, inner sep=0pt,outer sep=0pt]
\tikzstyle{redpoint}=   [draw, very thick, red, fill=white,shape=circle, minimum size=4pt, inner sep=0pt,outer sep=0pt]
\tikzstyle{label}=   [black,shape=circle, minimum size=4pt, inner sep=0pt,outer sep=0pt]
\definecolor{light}{gray}{.75}
\definecolor{pale}{gray}{.85}
\definecolor{med}{gray}{.5}
\definecolor{dark}{gray}{.25}
\newcommand{\Blue}[1]{{\color{blue}{#1}}}
\DeclareMathOperator{\Dep}{Dep}
\newcommand{\els}{\mathsf{els}}
\DeclareMathOperator{\im}{im}
\DeclareMathOperator{\Par}{Par}
\DeclareMathOperator{\Sur}{Sur}
\newtheorem{theorem}{Theorem}[section]
\newtheorem{prop}[theorem]{Proposition}
\newtheorem{conj}[theorem]{Conjecture}
\newtheorem{cor}[theorem]{Corollary}
\newtheorem*{rep@theorem}{\rep@title}
\newcommand{\newreptheorem}[2]{%
\newenvironment{rep#1}[1]{%
 \def\rep@title{#2 \ref{##1}}%
 \begin{rep@theorem}}%
 {\end{rep@theorem}}}
\theoremstyle{definition}
\newtheorem{remark}[theorem]{Remark}
\newtheorem{example}[theorem]{Example}
\newtheorem{prob}[theorem]{Problem}
\newtheorem{definition}[theorem]{Definition}
\newcommand{\0}{\emptyset}
\newcommand{\bd}{\partial} 
\newcommand{\defterm}[1]{\boldmath\textbf{#1}\unboldmath}
\newcommand{\excise}[1]{} 
\newcommand{\ov}[1]{\overline{{#1}}}
\newcommand{\sm}{\setminus}
\newcommand{\x}{\times}
\newcommand{\N}{\mathcal{N}}
\newcommand{\A}{\mathcal{A}}
\newcommand{\ep}{\varepsilon}
\newcommand{\Z}{\mathbb{Z}}
\newcommand{\R}{\mathbb{R}}
\newcommand{\HH}{\tilde H} 
\newcommand{\fancyB}{\mathscr{B}}
\def\moverlay{\mathpalette\mov@rlay}
\def\mov@rlay#1#2{\leavevmode\vtop{%
   \baselineskip\z@skip \lineskiplimit-\maxdimen
   \ialign{\hfil$\m@th#1##$\hfil\cr#2\crcr}}}
\newcommand{\charfusion}[3][\mathord]{
    #1{\ifx#1\mathop\vphantom{#2}\fi
        \mathpalette\mov@rlay{#2\cr#3}
      }
    \ifx#1\mathop\expandafter\displaylimits\fi}
\newcommand{\cupdot}{\charfusion[\mathbin]{\cup}{\cdot}}
\newcommand{\dju}{\cupdot}
\title{A positivity phenomenon in Elser's Gaussian-cluster percolation model}
\thanks{This work was completed in part at the 2018 Graduate Research Workshop in Combinatorics, which was supported in part by NSF grants \#1604458 and \#1603823, NSA grant \#H98230-18-1-0017,  a generous award from the Combinatorics Foundation, and Simons Foundation Collaboration Grants \#426971 (to M. Ferrara) and \#316262 (to S. Hartke).\\ 
\indent JLM was supported in part by a grant from the Simons Foundation (Grant Number 315347).}
\author[GDB]{Galen Dorpalen-Barry}
\address{School of Mathematics, University of Minnesota, United States}
\email{\href{mailto:dorpa003@umn.edu}{dorpa003@umn.edu}}
\author[CH]{Cyrus Hettle}
\address{School of Mathematics, Georgia Institute of Technology, United States}
\email{\href{mailto:chettle@gatech.edu}{chettle@gatech.edu}}
\author[DCL]{David C.\ Livingston}
\address{Department of Mathematics, University of Wyoming, United States}
\email{\href{mailto:dliving5@uwyo.edu}{dliving5@uwyo.edu}}
\author[JLM]{Jeremy L.\ Martin}
\address{Department of Mathematics, University of Kansas, United States}
\email{\href{mailto:jlmartin@ku.edu}{jlmartin@ku.edu}}
\author[GDN]{George D.\ Nasr}
\address{Department of Mathematics, University of Nebraska--Lincoln, United States}
\email{\href{mailto:george.nasr@huskers.unl.edu}{george.nasr@huskers.unl.edu}}
\author[JV]{Julianne Vega}
\address{Department of Mathematics, Kennesaw State University, United States}
\email{\href{mailto:jvega30@kennesaw.edu}{jvega30@kennesaw.edu}}
\author[HW]{Hays Whitlatch}
\address{Department of Mathematics, Gonzaga University, United States}
\email{\href{mailto:whitlatch@gonzaga.edu}{whitlatch@gonzaga.edu}}
\subjclass[2010]{%
05C31, 
05C70, 
05E45, 
82B43} 
\keywords{Graph, simplicial complex, Euler characteristic, nucleus, percolation, block-cutpoint tree}
\begin{document}
\date{\today}
\begin{abstract}
Veit Elser proposed a random graph model for percolation in which physical dimension appears as a parameter.  Studying this model combinatorially leads naturally to the consideration of numerical graph invariants which we call \emph{Elser numbers} $\els_k(G)$, where $G$ is a connected graph and $k$ a nonnegative integer.  Elser had proven that $\els_1(G)=0$ for all $G$.  By interpreting the Elser numbers as reduced Euler characteristics of appropriate simplicial complexes called \emph{nucleus complexes}, we prove that for all graphs $G$, they are nonpositive when $k=0$ and nonnegative for $k\geq2$.  The last result confirms a conjecture of Elser.  Furthermore, we give necessary and sufficient conditions, in terms of the 2-connected structure of~$G$, for the nonvanishing of the Elser numbers.
\end{abstract}

\maketitle
\section{Introduction}

Let $G=(V(G),E(G))$ be a connected undirected graph and $k\geq 0$ an integer.  A \defterm{nucleus} of $G$ is a connected subgraph $N\subseteq G$ such that $V(N)$ is a \defterm{vertex cover}; that is, every edge of $G$ has at least one endpoint in $V(N)$.  Let $\N(G)$ denote the set of all nuclei of~$G$.  The \defterm{$k^{\textrm{th}}$ Elser number} of $G$ is
\begin{equation}\label{define-elser}
\els_k(G) = (-1)^{|V(G)|+1} \sum_{N\in\N(G)} (-1)^{|E(N)|} |V(N)|^{k}
\end{equation}
This invariant was introduced by Veit Elser~\cite{Elser}, who conjectured~\cite{Elser-MO} that $\els_k(G)\geq 0$ for all graphs $G$ and integers $k \geq 2$.
In this paper, we answer completely the question of when $\els_k(G)$ is positive, negative or zero.
\begin{theorem}\label{thm:els}
Let $G$ be a connected graph with at least two vertices.  Then:
\begin{enumerate}[label=(\alph*)]
\item\label{mainthm:0} $\els_0(G) \leq 0$.
\item\label{mainthm:1} $\els_1(G)=0$.
\item\label{mainthm:2} $\els_k(G)\geq 0$ for all integers $k\geq 2$.  That is, Elser's conjecture holds.
\end{enumerate}
\end{theorem}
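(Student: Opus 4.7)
The plan is to interpret each Elser number (up to a known sign) as the reduced Euler characteristic of an auxiliary simplicial complex, which the authors call a \emph{nucleus complex}, and then extract the claimed signs from the homotopy type. Since part~\ref{mainthm:1} is already known from~\cite{Elser}, the main work is in parts~\ref{mainthm:0} and~\ref{mainthm:2}.

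The first step is a bookkeeping reformulation. Using $|V(N)|^k = \sum_{f : [k] \to V(N)} 1$ and exchanging the order of summation gives
\[
\els_k(G) \;=\; (-1)^{|V(G)|+1} \sum_{f : [k] \to V(G)} \Xi(G, \im f),
\qquad
\Xi(G, S) := \sum_{\substack{N \in \N(G) \\ S \subseteq V(N)}} (-1)^{|E(N)|}.
\]
Thus Theorem~\ref{thm:els} reduces entirely to pinning down the signs of the auxiliary invariants $\Xi(G, S)$ indexed by $S \subseteq V(G)$. Examining small examples ($G = K_2$, $G = K_3$) suggests the prediction $(-1)^{|V(G)|+1}\, \Xi(G, S) \geq 0$ whenever $S \neq \emptyset$, while $(-1)^{|V(G)|}\, \Xi(G, \emptyset) \geq 0$. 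Granting these inequalities, part~\ref{mainthm:2} follows by collecting terms in the displayed sum, and part~\ref{mainthm:0} is the single-summand case $k = 0$.

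The second step is to realize $\Xi(G, S)$ as, up to a uniform sign, the reduced Euler characteristic of a simplicial complex $\Delta_S(G)$ whose faces encode the ``extra data'' of a nucleus $N$ extending $S$; roughly, the face corresponding to $N$ should remember $V(N) \setminus S$ and $E(N)$, with dimension chosen so that summing $(-1)^{\dim}$ reproduces the desired signed count. Making this complex honestly simplicial (closed under faces) will likely require care, either through a clever choice of dimension convention or a reformulation as the order complex of a suitable poset of nuclei extending $S$.

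The key step, which I expect to be the main obstacle, is then to analyze the homotopy type of $\Delta_S(G)$ sharply enough to pin down the sign of $\tilde\chi$. The natural inductive device, signaled by the abstract's emphasis on 2-connected structure and by the keyword ``block-cutpoint tree,'' is to decompose $G$ along its blocks: any nucleus glues together locally compatible nuclei on individual blocks, and this should translate into a join or wedge decomposition of $\Delta_S(G)$ over the blocks of $G$. For a single 2-connected block one would try to exhibit a discrete Morse matching or a shelling showing $\Delta_S(G)$ is either contractible or a wedge of spheres of prescribed dimension parity. Once the homotopy type is controlled blockwise, reassembly along the block-cutpoint tree should deliver the signed inequality on $\Xi(G, S)$, hence the theorem. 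A pleasant byproduct should be that for singleton $S = \{v\}$ the complex $\Delta_{\{v\}}(G)$ is contractible, recovering Elser's identity $\sum_v \Xi(G, \{v\}) = 0$ vertex-by-vertex rather than merely in aggregate.
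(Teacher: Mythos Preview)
Your first step and the predicted sign pattern for $\Xi(G,S)$ match the paper exactly (Theorem~\ref{thm:elser-from-summands} and Proposition~\ref{prop:euler-all}). For the second step, the paper's complex is simpler than you anticipate: $\Delta^G_U$ has vertex set $E(G)$, with faces the edge-complements $E(G)\setminus E(N)$ of nuclei $N$ satisfying $U\subseteq V(N)$. This is automatically closed under taking subsets, since any supergraph of a nucleus is again a nucleus; no extra vertex data or poset machinery is needed.

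The genuine gap is in your third step. You propose to pin down $\tilde\chi$ by controlling the homotopy type of $\Delta_S(G)$: decompose along blocks, then on each 2-connected block exhibit a shelling or discrete Morse matching forcing a wedge of spheres of the correct dimensional parity. The authors report having attempted precisely this route and that it ``did not prove feasible'' (Section~\ref{sec:future}); the expected homology concentration for 2-connected $G$ is stated only as the open Conjecture~\ref{homology-concentration}. The block-cutpoint join decomposition you anticipate does appear (Proposition~\ref{prop:join}), but it is used only for the finer strict-positivity result Theorem~\ref{thm:strict-positivity}, not for Theorem~\ref{thm:els} itself. The paper's actual engine for Theorem~\ref{thm:els} is a deletion--contraction recurrence $\tilde\chi(\Delta^G_U)=\tilde\chi(\Delta^{G/e}_{U/e})-\tilde\chi(\Delta^{G\setminus e}_U)$ for any edge $e$ that is neither a loop nor a cut-edge (Theorem~\ref{thm:delete-contract}). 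Iterating this until only tree minors remain expresses $\tilde\chi(\Delta^G_U)$ as a signed sum over trees; a parity count (Proposition~\ref{prop:sign}) shows that every summand carries the \emph{same} sign $(-1)^{|E(G)|-|V(G)|+1}$, and the tree case is computed directly (Proposition~\ref{prop:tree-euler}). This delivers the sign of $\tilde\chi(\Delta^G_U)$ without ever touching homotopy type.
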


Part~\ref{mainthm:1} is \cite[Theorem~2]{Elser}.  Theorem~\ref{thm:els} extends  \cite[Theorem~2]{Elser} to all $k$. We also extend the previous  result: a characterization of strict positivity of the Elser numbers.

\begin{theorem}\label{thm:strict-positivity}
Let $G$ be a connected simple graph.
\begin{enumerate}[label=(\alph*)]
\item\label{strict-pos:2conn} If $G$ has no cut-vertex, then $\els_0(G)<0$, $\els_1(G)=0$, and $\els_k(G) > 0$ for all $k \geq 2$.
\item\label{strict-pos:not2conn} Otherwise, $\els_k(G) \neq 0$ if and only if $k \geq \ell$, where $\ell\geq 2$ is the number of leaves in the block-cutpoint tree of~$G$ (that is, the number of 2-connected components of $G$ that contain exactly one cut-vertex).
\end{enumerate}
\end{theorem}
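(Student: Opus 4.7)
The plan is to derive Theorem~\ref{thm:strict-positivity} from Theorem~\ref{thm:els} via a topological analysis of nucleus complexes for the 2-connected case and a block-cutpoint decomposition for the general case. For part~\ref{strict-pos:2conn}, Theorem~\ref{thm:els} already supplies the nonstrict inequalities, so only strictness must be shown when $G$ is 2-connected. Since $G$ itself is always a nucleus and 2-connectedness provides abundant cycle structure, the nucleus complex (whose reduced Euler characteristic computes $\pm\els_k(G)$) should be topologically nontrivial. I would try to show that it is homotopy equivalent to a nontrivial wedge of spheres of a single dimension---for instance via a shelling or discrete Morse matching enabled by 2-connectedness---forcing nonzero reduced homology and hence $\els_0(G) < 0$ and $\els_k(G) > 0$ for all $k \geq 2$.

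For part~\ref{strict-pos:not2conn}, let $\ell$ be the number of leaf blocks of $G$. The central step is a block decomposition formula. Choose a cut-vertex $v$ and write $G = G_1 \cup G_2$ with $G_1 \cap G_2 = \{v\}$. A case analysis (with a handful of boundary cases when one $G_i$ is a star centered at $v$) sets up a bijection between $\N(G)$ and compatible pairs $(N_1,N_2) \in \N(G_1) \times \N(G_2)$: the edge counts decompose as $|E(N)| = |E(N_1)| + |E(N_2)|$, and the vertex counts satisfy
\[
|V(N)| = |V(N_1)| + |V(N_2)| - [v \in V(N)].
\]
Expanding $|V(N)|^k$ via the binomial theorem expresses $\els_k(G)$ as a bilinear combination of modified Elser-type invariants of $G_1$ and $G_2$. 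Packaging this as an exponential generating function $E_G(x) := \sum_k \els_k(G)\,x^k/k!$ yields a factorization $E_G = F(E_{G_1}, E_{G_2})$ for an explicit bilinear map $F$; iterating along the block-cutpoint tree produces a product formula with one factor per block.

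From this product formula the order of vanishing of $E_G(x)$ at $x = 0$ can be read off: by part~\ref{strict-pos:2conn} each 2-connected block $B$ satisfies $E_B(0) = \els_0(B) \neq 0$, so only the gluing operation at each cut-vertex can contribute to vanishing, and the leaf-versus-interior distinction in the block-cutpoint tree means that each of the $\ell$ leaf blocks contributes exactly one factor of $x$ while interior blocks contribute none. Hence the vanishing order of $E_G$ at $0$ is exactly $\ell$, which gives $\els_k(G) = 0$ for $k < \ell$ and $\els_k(G) \neq 0$ for $k \geq \ell$.

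\textbf{Expected difficulty.} The crux is establishing the block-decomposition bijection and the resulting generating-function identity. The boundary cases (where $v \notin V(N)$, or where one $G_i$ collapses to a star at $v$) require careful treatment, and the vertex-count bookkeeping across the cut-vertex is precisely where the dependence on the number of leaves arises---so the same calculation must simultaneously yield both directions of the equivalence. A secondary technical point is pinning down the correct topological model for nucleus complexes in the 2-connected case: producing an explicit shelling or Morse matching may require inducting on an ear decomposition of $G$.
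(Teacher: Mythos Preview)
Your proposal has two genuine gaps, one in each part.

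\textbf{Part (a).} You write ``the nucleus complex (whose reduced Euler characteristic computes $\pm\els_k(G)$)'' and propose to show it is a nontrivial wedge of spheres via a shelling or Morse matching. But there is no single nucleus complex that computes $\els_k(G)$: by Theorem~\ref{thm:elser-from-summands}, $\els_k(G)$ is a \emph{weighted sum} of $\tilde\chi(\Delta^G_U)$ over all $U\subseteq V(G)$, with weights $\Sur(k,|U|)$. Only for $k=0$ does a single term ($U=\emptyset$) survive. More seriously, the homotopy type you are hoping for is precisely the paper's Conjecture~\ref{homology-concentration}, which remains open; the authors explicitly note that $\Delta^G_U$ need not be a matroid complex, and their attempts via shellability and pseudo-independence only partially succeed. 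The paper avoids this obstacle entirely: it uses the deletion/contraction recurrence (Theorem~\ref{thm:delete-contract}) to express $\tilde\chi(\Delta^G_U)$ as a same-sign sum over tree minors (the RDCT machinery of \S\ref{sec:proof-of-conj}), then proves a strengthened ear-decomposition theorem (Theorem~\ref{all-ears}) guaranteeing that \emph{any} desired tree minor appears in \emph{some} RDCT. Strictness then follows by exhibiting one tree minor whose contribution is nonzero. Your idea of inducting on an ear decomposition is in the right spirit, but you would be using it to build a Morse matching, whereas the paper uses it to realize tree minors.

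\textbf{Part (b).} Your block-decomposition bijection and EGF plan is a different route from the paper's, and it is plausible in outline, but the details are off and the key step is asserted rather than argued. First, the indicator $[v\in V(N)]$ is always~1: by Proposition~\ref{prop:cutsets} every nucleus contains every cut-vertex, so your vertex-count formula is just $|V(N)|=|V(N_1)|+|V(N_2)|-1$, and the bijection is only with pairs $(N_1,N_2)$ \emph{each containing $v$}. This means your factors are not $\els_k(G_i)$ but ``pointed'' variants summing only over nuclei through~$v$, so the appeal to part~(a) (``$E_B(0)=\els_0(B)\neq 0$'') does not apply directly. Second, and more importantly, the sentence ``each of the $\ell$ leaf blocks contributes exactly one factor of $x$ while interior blocks contribute none'' \emph{is} the theorem; you have restated what must be shown without indicating why it holds. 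The paper handles this cleanly at the level of complexes: Proposition~\ref{prop:join} gives $\Delta^G_U = \Delta^{G_1}_{U\cap V(G_1)} * \Delta^{G_2}_{U\cap V(G_2)}$, so $\tilde\chi$ factors multiplicatively over blocks; Proposition~\ref{not-singleton} says the block factor vanishes iff $|U'\cap V(B_i)|=1$ (where $U'=U\cup K$); and then a counting argument shows the minimal $|U|$ making all factors nonzero is exactly $\ell$, one vertex per leaf block. Your EGF repackaging could likely be made to recover this, but as written it skips the work.
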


Before describing the methods of proof, we describe the motivation behind Elser's conjecture, which arises in percolation theory.  Roughly speaking, percolation models a physical medium by a random graph $\Gamma$, often taken to be a subgraph of $\Z^2$ or some other periodic lattice.  Vertices or edges occur independently with some fixed probability, corresponding to the presence or absence of atoms or bonds between them, and the permeability of the medium is modeled by the component structure of the graph.  For an overview of percolation theory, see the excellent expository article by Kesten~\cite{Kesten}.  In many percolation models, the ambient graph (such as $\Z^2$) controls the combinatorics so strongly that one cannot consider physical dimension as a parameter of the model, but must study different dimensions as separate problems.

For this reason, Elser~\cite{Elser} proposed a percolation model in which dimension can be treated as a parameter, following work of Gaunt and Fischer \cite{FG} and Leibbrandt \cite{Leibb}. 
Elser's model starts with a random geometric graph model consisting of a collection of $N$ points uniformly distributed throughout a $D$-dimensional volume $V$.  The edge between two points $z_1,z_2$ occurs with probability $\exp(-a\|z_1-z_2\|^2)$, where $a$ is some fixed constant.  Let $n_k=n_k(a,V,z_1,\dots,z_N)$ be the expected number of \emph{$k$-clusters}, or connected components with $k$ vertices.  Using a property of Gaussian integrals due to Kirchhoff~\cite{Kirchhoff}, Elser expanded the generating function for the numbers $n_k$ as
\[
\mathsf{F}(x,y)
= \sum_{k=1}^\infty  y^k n_k
= \sum_{m=1}^\infty \frac{x^{m-1}}{m!} \left(\sum_{G\in\mathscr{C}_m} \left(\frac{1}{\tau(G)}\right)^{D/2}W(G,y)
\right)
\]
\cite[eqn.~(6)]{Elser}, where $\mathscr{C}_m$ denotes the set of simple connected graphs on $m$ labeled vertices; $\tau(G)$ the number of spanning trees of $G$; and
\[
W(G,y) = \sum_{N\in\mathcal{N}(G)} (-1)^{|E(G)|-|E(N)|}y^{|V(N)|}
\]
where as before $\mathcal{N}(G)$ is the set of nuclei of $G$.  What we call the $k^{th}$ Elser number equals $(yD_y)^kW(G,y)\vert_{y=1}$, where $D_y$ means differentiation with respect to~$y$.

We prove Elser's conjecture using techniques from topological combinatorics.  Our general approach is to interpret the numbers $\els_k(G)$ as sums of reduced Euler characteristics $\tilde\chi(\Delta^G_U)$.  Here $\Delta^G_U$ is a simplicial complex whose faces correspond to nuclei containing a specified set $U$ of vertices.  The precise formula is given by Theorem~\ref{thm:elser-from-summands} below.  While the topology of these simplicial complexes remains mysterious in most cases, it is nonetheless possible to establish a deletion/contraction-type recurrence for their reduced Euler characteristics (Theorem~\ref{thm:delete-contract}) and thus to determine precisely the sign of $\tilde\chi(\Delta^G_U)$, which turns out to be just $(-1)^{|E(G)|+|V(G)|}$ (Proposition~\ref{prop:euler-all}).  The upshot is that every summand in the expression for $\els_k(G)$ in Theorem~\ref{thm:elser-from-summands} is nonnegative, proving Theorem \ref{thm:els}.

Most of our arguments phrased topologically can be replaced with purely combinatorial arguments using techniques such as inclusion/exclusion or sign-reversing involutions (for example, the topological statement that cones are contractible, hence have zero reduced Euler characteristic, can be replaced with the combinatorial statement that toggling the cone point is a fixed-point-free involution on faces that changes parity of dimension).  We originally approached Elser's problem by investigating the topology of nucleus complexes (see \S\ref{sec:future}), where there are still open problems to resolve.

The paper is structured as follows.  In Section~\ref{sec:prelim} we set up notation for graphs and simplicial complexes, and give basic definitions and facts about nuclei and Elser numbers.  The proofs of the main theorems occupy Sections~\ref{sec:construct-complex}--\ref{sec:positivity} of the paper.  In Section~\ref{sec:construct-complex}, we construct the simplicial complexes $\Delta^G_U$ and prove the first results linking the Elser numbers to their reduced Euler characteristics; the deletion-contraction recurrence is established in Section~\ref{section:deletion-contraction}.  Using these tools, we then prove Elser's conjecture for trees in Section~\ref{sec:trees} and for general graphs in Section~\ref{sec:proof-of-conj}.  Theorem~\ref{thm:strict-positivity} is proved in Section~\ref{sec:positivity}.  Its proof requires a purely graph-theoretic result (Theorem~\ref{all-ears}) that strengthens the standard result that every 2-connected graph has an ear decomposition, and may be of independent interest (see~\cite{Schmidt} for a related algorithm).  Section~\ref{sec:monotonicity} proves a monotonicity result: $\els_k(G) \geq \els_k(G/e)+\els_k(G\sm e)$ for all~$G$ and~$e$, with equality when $k=0$.  We conclude in  Section~\ref{sec:future} with observations and conjectures on the topology of nucleus complexes, which appear to have a rich structure.

\section{Preliminaries} \label{sec:prelim}

\subsection{Graphs, nuclei, and Elser numbers}

As a general reference for the graph theory necessary for this paper, we refer the reader to \cite[Sections 1.1, 1.3, 2.1, 4.1, 4.2]{West}.
Throughout, ``graph'' means ``undirected graph.''  The vertices and edges of a graph are denoted by $V(G)$ and $E(G)$ respectively. For $A \subseteq E(G)$, we write $\ov{A} = E(G)\sm A$ for the complement of~$A$, if the ambient graph $G$ is clear from context.

Two edges are \defterm{parallel} if they have the same pair of endpoints (or are both loops incident to the same vertex).  We write $\Par(e)$ for the equivalence class of all edges parallel to $e$.  The \defterm{deparallelization} $\Dep(G)$ is the graph obtained from $G$ by identifying all edges in the same parallel class.

The \defterm{deletion} of an edge $e$ from $G$ is the graph $G \sm e$ with vertex set $V(G)$ and edge set $E(G) \sm \{e\}$. The \defterm{contraction} of $e$ in $G$ is the graph $G/e$ obtained by removing $e$ and identifying its endpoints $v,w$ into a single vertex (denoted $vw$). A \defterm{minor} of $G$ is a graph obtained by some sequence of deletions and contractions, i.e., of the form $G/C\sm D$, where $C,D$ are disjoint subsets of $E(G)$.  Every $U\subseteq V(G)$ gives rise to a set $U/e\subseteq V(G/e)$, for an edge $e = \{u,v\}$, defined by
\[U/e=\begin{cases}
U & \text{ if } v,w\not\in U,\\
U\sm\{v,w\}\cup\{vw\} &\text{ otherwise.}
\end{cases}\]
This notation can be iterated; if $C=\{e_1,\dots,e_k\}\subseteq E(G)$, then we set $U/C=((U/e_1)/\cdots)/e_k$; the order of contraction does not matter.  If $H=G/C\sm D$ is a minor of $G$, then we write $U[H]$ for $U/C$.

An edge $e\in E(G)$ is a \defterm{cut-edge} of $G\sm e$ has more components than $G$.  Likewise, a vertex $x\in V(G)$ is a \defterm{cut-vertex} if $G-x$ has more components than $x$, where $G-x$ is the graph obtained by deleting $x$ and all incident edges.

A \defterm{vertex cover} of $G$ is a set $C \subseteq V(G)$ such that every edge $e \in E(G)$ has at least one endpoint in $C$. In particular, if $G$ has a loop at vertex $v$, then every vertex cover of $G$ must contain $v$. Notice that the vertex covers of $\Dep(G)$ are the same as those of $G$.

\begin{definition}
A \defterm{nucleus} of $G$ is a connected subgraph $N$ of $G$ whose vertices $V(N)$ form a vertex cover of $G$. We denote the set of nuclei of $G$ by $\N(G)$.
\end{definition}

Note that Elser assumed that $G$ is simple, which is most natural from a physical point of view; however, we do not make this assumption, since non-simple graphs will naturally arise.

\begin{prop}\label{prop:cutsets}
Let $N \in \N(G)$. Let $C \subseteq V(G)$, and suppose $G \sm C$ is disconnected. Then $V(N) \cap C \neq \0$.\\
In particular, $V(N)$ contains all cut vertices of $G$.
\end{prop}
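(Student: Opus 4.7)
The plan is to argue by contradiction: assume $V(N) \cap C = \emptyset$, and derive that $N$ must be disconnected, which violates the definition of a nucleus. The key observation is that the hypothesis on $C$ puts $N$ entirely inside the disconnected graph $G \sm C$, so I just need to verify that $V(N)$ meets at least two of its components.

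First I would note that, under the assumption $V(N) \cap C = \emptyset$, we have $V(N) \subseteq V(G) \sm C$, and every edge of $N$ has both endpoints in $V(N)$; therefore $N$ is a subgraph of the induced subgraph on $V(G) \sm C$, which is $G \sm C$. Let $G_1, \ldots, G_r$ (with $r \geq 2$) denote the connected components of $G \sm C$. It then suffices to show $V(N) \cap V(G_i) \neq \emptyset$ for at least two indices $i$, because then the connected subgraph $N$ would have vertices in two different components of a graph containing it, which is impossible.

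In fact one can show $V(N)$ meets every $G_i$. Fix any $i$ and pick a vertex $v \in V(G_i)$. Since $G$ is connected and has at least two vertices, $v$ has some incident edge $e$ in $G$; the other endpoint of $e$ lies either in $V(G_i)$ (an internal edge of the component) or in $C$ (no component-crossing edges exist in $G \sm C$). If $V(N) \cap V(G_i) = \emptyset$, then together with $V(N) \cap C = \emptyset$ this means neither endpoint of $e$ is in $V(N)$, contradicting the vertex-cover property of $V(N)$. So $V(N)$ meets each $G_i$, and hence at least two of them, completing the contradiction.

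I don't anticipate a real obstacle here; the only nuance is making sure the argument uses both the connectedness of $G$ (to guarantee each $v \in V(G_i)$ has an incident edge) and the definition of vertex cover (to force the other endpoint into $V(N)$). The ``in particular'' clause follows immediately by specializing to $C = \{x\}$: if $x$ is a cut-vertex then $G \sm \{x\} = G - x$ is disconnected, so the first part yields $x \in V(N)$.
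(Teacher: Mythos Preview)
Your proof is correct and follows essentially the same route as the paper's: assume $V(N)\cap C=\emptyset$, use the vertex-cover property to place vertices of $N$ in at least two components of $G\sm C$, and conclude that $N$ is disconnected. The paper phrases the middle step as ``$V(N)$ contains all neighbors of vertices in $C$'' rather than arguing component by component, but the content is the same.
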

\begin{proof}
Suppose $V(N) \cap C$ is empty. Since $V(N)$ is a vertex cover, $V(N)$ must contain all neighbors of vertices in $C$. In particular, $N$ contains two vertices in different components of $G \sm C$. But since $N=N \sm C$, this implies $N$ is not connected, a contradiction.
\end{proof}

\begin{example}\label{ex:K2}
The complete graph $K_2$ on two vertices has three nuclei: itself and its two one-vertex subgraphs.  Therefore,
\[
\els_k(K_2) = (-1)^{2 +1}\sum\limits_{N \in \N(K_2)}(-1)^{|E(N)|}|V(N)|^k = -1(1 +1 -2^k) = 2^k-2.
\]
\end{example}

\begin{example}\label{ex:closed-formulas}
For many standard graphs, it is easy to determine their nuclei and Elser numbers.
\begin{enumerate}[label=(\alph*)]
\item\label{ex:tree} Let $T$ be a tree with $n \geq 3$ vertices.  Then its nuclei are precisely the subgraphs obtained by deleting some set of leaf vertices.  In particular, if $T$ has $\ell$ leaves, then it has $2^\ell$ nuclei. Moreover, if $L$ is the set of leaf vertices in $T$, then
\[\els_k(T) = (-1)^{n+1} \sum_{J\subseteq L} (-1)^{n-|J|-1} |V(T) \sm J|^k = \sum_{j=0}^{\ell} (-1)^{\ell+j} \binom{\ell}{j} (n-j)^k.\]
\item As a special case, for $n\geq3$, the $n$-vertex path $P_n$ has four nuclei: itself and the paths obtained by deleting one or both endpoints.  So:
\[
\els_k(P_n) = n^k-2(n-1)^k+(n-2)^k.
\]
\item\label{ex:C3} The cycle graph $C_n$ has precisely $2n+1$ nuclei: itself, the $n$ copies of $P_n$ obtained by deleting a single edge, and the $n$ copies of $P_{n-1}$ obtained by deleting a single vertex and its two incident edges. For example, here are the seven nuclei of $C_3$:

\begin{center}
\begin{tikzpicture}[scale=.5]
\node[point] (A) at (1,1){};
\node[point] (B) at (0,0){};
\node[point]  (C) at (2,0){};

\path[-] (A) edge (B);
\path[-] (B) edge (C);
\path[-] (A) edge (C);
 \end{tikzpicture}
 \hspace{.5 cm}
 \begin{tikzpicture}[scale=.5]
\node[point] (A) at (1,1){};
\node[point] (B) at (0,0){};
\node[point]  (C) at (2,0){};

\path[-] (A) edge (B);
\path[-] (B) edge (C);
 \end{tikzpicture}
 \hspace{.5 cm}
 \begin{tikzpicture}[scale=.5]
\node[point] (A) at (1,1){};
\node[point] (B) at (0,0){};
\node[point]  (C) at (2,0){};

\path[-] (A) edge [bend right =0] node[above] {} (B);
\path[-] (A) edge [bend right =0] node[above] {} (C);
 \end{tikzpicture}
 \hspace{.5 cm}
 \begin{tikzpicture}[scale=.5]
\node[point] (A) at (1,1){};
\node[point] (B) at (0,0){};
\node[point]  (C) at (2,0){};
\path[-] (B) edge (C);
\path[-] (A) edge (C);
 \end{tikzpicture}
 \hspace{.5 cm}
 \begin{tikzpicture}[scale=.5]
\node[point] (A) at (1,1){};
\node[point] (B) at (0,0){};

\path[-] (A) edge [bend right =0] node[above] {} (B);
 \end{tikzpicture}
 \hspace{.5 cm}
 \begin{tikzpicture}[scale=.5]
\node[point] (B) at (0,0){};
\node[point]  (C) at (2,0){};

\path[-] (B) edge (C);
 \end{tikzpicture}
 \hspace{.5 cm}
 \begin{tikzpicture}[scale=.5]
\node[point] (A) at (1,1){};
\node[point]  (C) at (2,0){};

\path[-] (A) edge(C);
 \end{tikzpicture}
\end{center}

Thus the Elser numbers are:
\begin{align*}
\els_k(C_n)
&=  (-1)^{n+1}\Big((-1)^{n}n^{k}+n\left((-1)^{n-1}n^{k}\right)+n\left((-1)^{n-2}(n-1)^{k}\right) \Big) \notag\\
&=  n(n-1)\left( n^{k-1}-(n-1)^{k-1} \right). \label{elser-cycle}
\end{align*}
When $n=3$, this reduces to
 \(
 \els_k(C_3)  = 6~(3^{k-1}-2^{k-1}).
 \)
\end{enumerate}
\end{example}

\subsection{Simplicial complexes}

We will study the nuclei of a graph using the language of simplicial complexes and their Euler characteristics, which we now introduce briefly.  An \defterm{(abstract) simplicial complex} $\Delta$ on a finite set $X$ of \defterm{vertices} is a set of subsets of $X$ (called \defterm{faces}) such that
\begin{enumerate}
\item[(i)] $\emptyset \in \Delta$;
\item[(ii)] If $\sigma \in \Delta$ and $\tau \subseteq \sigma$, then $\tau \in \Delta$.
\end{enumerate}
The \defterm{reduced Euler characteristic} of $\Delta$ is
\begin{equation} \label{comb-euler}
\tilde\chi(\Delta)=\sum_{\sigma\in\Delta} (-1)^{\dim\sigma}
\end{equation}
where $\dim\sigma=|\sigma|-1$.  For example, $\Delta$ is said to be a \defterm{cone} with \defterm{cone point}
$x$ if every maximal face contains $x$; in this case toggling the cone point gives a sign-reversing involution on $\Delta$, so $\tilde\chi(\Delta)=0$.

These primitive notions largely suffice for the techniques used in the paper, except for \S\ref{sec:future}.  For the interested reader, we give a very brief summary of the topology of simplicial complexes.  (Experts will note that we omit several refinements, such as homology groups with arbitrary coefficients.)  For the complete story, we refer the reader to \cite[\S 2.1]{Hatcher} or \cite[\S2.3]{Kozlov}.  

An abstract simplicial complex can be regarded as a topological space in the following way.  Assume that $X=\{1,\dots,n\}$, and associate with each face $\sigma\in\Delta$ the convex hull of the standard basis vectors $\{\mathbf{e}_i:\ i\in\sigma\}$; the \defterm{standard geometric realization} $|\Delta|$ of $\Delta$ is the union of all such simplices.  (Note that the convex hull of $d$ points has geometric dimension~$d-1$, explaining the definition of dimension above.)  More generally, a \defterm{geometric realization} of $\Delta$ is any topological space homeomorphic to~$|\Delta|$.  Topological invariants of $|\Delta|$, such as its singular homology groups, can be obtained from the purely combinatorial structure of $\Delta$.  Thus it makes sense to say that $\Delta$ itself has topological properties, such as contractibility.  It is frequently convenient to make no distinction between an (abstract) simplicial complex and its geometric realization.

In one important special case (nucleus complexes of graphs with two vertices) we will need the slightly more general notion of a \defterm{$\Delta$-complex} (also known as a \textit{trisp} or \textit{triangulated space}).  A $\Delta$-complex is much like a simplicial complex in that it is built out of simplices attached to each other along common sub-simplices.  However, we no longer require that distinct simplices have distinct vertex sets, or even that each $(d-1)$-dimensional simplex have $d$ distinct vertices.  For example, all graphs are 1-dimensional $\Delta$-complexes, while only simple graphs (those with no loops or parallel edges) are simplicial complexes.  The reduced Euler characteristic and other topological properties of $\Delta$-complexes are defined the same way as for simplicial complexes.

The homology groups of a simplicial or $\Delta$-complex are fundamental invariants that measure, among other things, the number of ``holes'' of $\Delta$ of various dimensions.  The topological boundary $\bd\sigma$ of every $k$-dimensional face $\sigma\in\Delta$ is a union of its $(k-1)$-faces; algebraically, the boundary operation gives rise to maps $\bd_k:C_k(\Delta;\R)\to C_{k-1}(\Delta;\R)$, where $C_k(\Delta;\R)$ is the $\R$-vector space spanned by $k$-faces of~$\Delta$.  The signs are arranged in a way that keeps track of relative orientation, and have the consequence that $\bd_k\bd_{k+1}$ is the zero map for all $k$; equivalently, $\im\bd_{k+1}\subseteq\ker\bd_k$.  The \defterm{$k^{th}$ reduced simplicial homology group} of $\Delta$ is $\HH_k(\Delta;\R)=(\ker\bd_k)/(\im\bd_{k+1})$.  These groups turn out to be topological invariants of the geometric realization $|\Delta|$.  In particular, the reduced Euler characteristic of $|\Delta|$ is
$\sum_{n \geq 0} (-1)^n\dim_\R(\HH_n(\Delta;\R))$; the equality between this formula and the purely combinatorial formula~\eqref{comb-euler} is known as the \textit{Euler-Poincar\'e theorem}.  Note that if $\Delta$ is a cone, then $|\Delta|$ is contractible (because it deformation-retracts onto the cone point), hence all homology groups vanish, confirming that $\tilde\chi(\Delta)=0$.

\section{Nucleus complexes}
\label{sec:construct-complex}

In this section, we study the \defterm{nucleus complexes} of a graph $G$.  For each $U\subseteq V(G)$, the $U$-nucleus complex $\Delta_U^G$ is a simplicial complex whose vertices are the edges of $G$; its faces are complements of nuclei whose vertex support contains~$U$.  We show that the $k^{th}$ Elser number of $G$ may be written as a weighted sum of reduced Euler characteristics of nucleus complexes (Theorem~\ref{thm:elser-from-summands}).

Elser notes the following identity \cite[Proof of Theorem~2]{Elser} :
\begin{align*}
\els_1(G)
&= (-1)^{|V(G)|+1}\sum_{N\in\N(G)} (-1)^{|E(N)|}|V(N)|\\
&= (-1)^{|V(G)|+1}\sum_{v \in V(G)} \sum_{\substack{N\in\N(G):\\ v \in V(N)}} (-1)^{|E(N)|}.
\end{align*}
This identity allowed Elser to characterize $\els_1(G)$ for any $G$. We give a more general identity, which works for any $k \geq 0$. Let $\Sur(a,b)$ denote the number of surjections from a set of size $a$ to a set of size $b$. (By convention, we set $\Sur(0,0)=1$ and $\Sur(a,b)=0$ if exactly one of $a,b$ is zero.)  (Note that $\Sur(a,b)=b!\,S(a,b)$, where $S(a,b)$ denotes a Stirling number of the second kind.)

\begin{prop}\label{prop:elser-from-summands:0}
Let $G$ be a graph and $k$ a nonnegative integer. Then
\[\els_k(G)=(-1)^{|E(G)|+|V(G)|+1}\sum_{U \subseteq V(G)} \Sur(k,|U|) \sum_{\substack{N\in\N(G):\\ U\subseteq V(N)}} (-1)^{|E(\ov{N})|}.\]
\end{prop}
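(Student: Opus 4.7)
The plan is to start from the definition
\[
\els_k(G) = (-1)^{|V(G)|+1} \sum_{N\in\N(G)} (-1)^{|E(N)|} |V(N)|^{k}
\]
and expand the factor $|V(N)|^k$ using the classical surjection identity
\[
n^k \;=\; \sum_{U\subseteq [n]} \Sur(k,|U|),
\]
valid for all $n,k\geq 0$ (with the conventions $\Sur(0,0)=1$ and $\Sur(k,0)=\Sur(0,k)=0$ for $k>0$). The identity follows from classifying each function $f\colon [k]\to[n]$ by its image $U=f([k])$: for a fixed $U$ of size $j$, there are exactly $\Sur(k,j)$ functions $[k]\twoheadrightarrow U$. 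Applying this to the vertex set $V(N)$ gives
\[
|V(N)|^k \;=\; \sum_{U\subseteq V(N)} \Sur(k,|U|).
\]

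Next, substitute this expansion into the formula for $\els_k(G)$ and swap the order of summation, replacing a sum over pairs $(N,U)$ with $U\subseteq V(N)$ by summing first over $U\subseteq V(G)$ and then over the nuclei $N$ with $U\subseteq V(N)$:
\[
\els_k(G) \;=\; (-1)^{|V(G)|+1}\sum_{U\subseteq V(G)} \Sur(k,|U|) \sum_{\substack{N\in\N(G):\\ U\subseteq V(N)}} (-1)^{|E(N)|}.
\]

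Finally, convert the exponent from $|E(N)|$ to $|E(\ov N)|$. Since $N$ is a subgraph of $G$, we have $|E(N)|+|E(\ov N)|=|E(G)|$, so
\[
(-1)^{|E(N)|} \;=\; (-1)^{|E(G)|}\,(-1)^{|E(\ov N)|}.
\]
Pulling this constant sign outside the sum and combining it with $(-1)^{|V(G)|+1}$ yields the overall prefactor $(-1)^{|E(G)|+|V(G)|+1}$ claimed in the statement.

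There is no real obstacle here: the proof is a two-line manipulation once one knows the surjection identity, plus a sign bookkeeping step. The only mildly subtle point is checking boundary cases ($k=0$ or $|V(N)|=0$), which are handled exactly by the stated conventions on $\Sur(a,b)$, ensuring that the $U=\emptyset$ summand correctly reproduces $|V(N)|^0 = 1$ when $k=0$ and contributes $0$ whenever $k>0$ and $|V(N)|=0$. This identity is what converts the problem of bounding $\els_k(G)$ into one of determining the signs of the inner sums, which will be realized as reduced Euler characteristics of the nucleus complexes $\Delta^G_U$ in the subsequent sections.
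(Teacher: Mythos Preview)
Your proof is correct and follows essentially the same route as the paper: expand $|V(N)|^k$ via the surjection identity, swap the order of summation, and adjust the sign using $|E(N)|+|E(\ov N)|=|E(G)|$. The paper compresses these steps into two displayed lines, but the argument is identical.
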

\begin{proof}
The term $|V(N)|^k$ counts functions $[k]\to V(N)$, and such a function is the same thing as a surjection from $[k]$ to some subset of $V(N)$.  Therefore,
\begin{align*}
\els_k(G)
&= (-1)^{|V(G)|+1}\sum_{N\in\N(G)} (-1)^{|E(N)|} \sum_{U \subseteq V(N)} \Sur(k,|U|)\\
&= (-1)^{|E(G)|+|V(G)|+1}\sum_{U \subseteq V(G)} \Sur(k,|U|) \sum_{\substack{N\in\N(G):\\ U\subseteq V(N)}} (-1)^{|E(\ov{N})|}.\qedhere
\end{align*}
\end{proof}

We now rephrase Proposition~\ref{prop:elser-from-summands:0} in terms of Euler characteristics of nucleus complexes.

\begin{definition} \label{defn:u-nucleus-complex}
Let $G$ be a connected graph with $|V(G)|\geq 3$, and let $U\subseteq V(G)$.  The \defterm{$U$-nucleus complex of $G$} is the simplicial complex
\(
\Delta^G_U
=\{E(G)\sm E(N):\ N\in\N(G),\ V(N)\supseteq U\}.
\)
The set $\Delta^G_U$ is a simplicial complex because every graph obtained by adding edges to a nucleus is also a nucleus.
\end{definition}

\begin{example}
Label the vertices of $K_3$ as 1, 2, 3 and its edges as 12, 13, 23.  The nuclei of $K_3$ are shown in Example~\ref{ex:closed-formulas}\ref{ex:C3}.  Accordingly, its nucleus complexes $\Delta^{K_3}_U$ are as shown in Figure~\ref{fig:nucleus-K3}.  Up to isomorphism, the complex $\Delta^{K_3}_U$ depends only on $|U|$.

\begin{figure}[ht]
\begin{center}
\begin{tikzpicture}[scale=0.8]
\newcommand{\trisize}{.8}
\newcommand{\lableng}{1.2}
\newcommand{\spacing}{4}
\newcommand{\vxsize}{0.075}
\draw (90:\trisize)--(210:\trisize)--(330:\trisize)--cycle;
\draw[fill=black] (90:\trisize) circle(\vxsize);	\node at (90:\lableng) {\scriptsize\sf12};
\draw[fill=black] (210:\trisize) circle(\vxsize);	\node at (210:\lableng) {\scriptsize\sf13};
\draw[fill=black] (330:\trisize) circle(\vxsize);	\node at (330:\lableng) {\scriptsize\sf23};
\node at (270:\lableng+.4) {$U=\0$};
\node at (270:\lableng+1.2) {$\tilde\chi(\Delta^{K_3}_U)=-1$};
\begin{scope}[shift={(\spacing,0)}]
\draw (90:\trisize)--(330:\trisize)--(210:\trisize);
\draw[fill=black] (90:\trisize) circle(\vxsize);	\node at (90:\lableng) {\scriptsize\sf12};
\draw[fill=black] (210:\trisize) circle(\vxsize);	\node at (210:\lableng) {\scriptsize\sf13};
\draw[fill=black] (330:\trisize) circle(\vxsize);	\node at (330:\lableng) {\scriptsize\sf23};
\node at (270:\lableng+.4) {$U=\{1\}$};
\node at (270:\lableng+1.2) {$\tilde\chi(\Delta^{K_3}_U)=0$};
\end{scope}
\begin{scope}[shift={(2*\spacing,0)}]
\draw (210:\trisize)--(330:\trisize);
\draw[fill=black] (90:\trisize) circle(\vxsize);	\node at (90:\lableng) {\scriptsize\sf12};
\draw[fill=black] (210:\trisize) circle(\vxsize);	\node at (210:\lableng) {\scriptsize\sf13};
\draw[fill=black] (330:\trisize) circle(\vxsize);	\node at (330:\lableng) {\scriptsize\sf23};
\node at (270:\lableng+.4) {$U=\{1,2\}$};
\node at (270:\lableng+1.2) {$\tilde\chi(\Delta^{K_3}_U)=1$};
\end{scope}
\begin{scope}[shift={(3*\spacing,0)}]
\draw[fill=black] (90:\trisize) circle(\vxsize);	\node at (90:\lableng) {\scriptsize\sf12};
\draw[fill=black] (210:\trisize) circle(\vxsize);	\node at (210:\lableng) {\scriptsize\sf13};
    \draw[fill=black] (330:\trisize) circle(\vxsize);	\node at (330:\lableng) {\scriptsize\sf23};
\node at (270:\lableng+.4) {$U=\{1,2,3\}$};
\node at (270:\lableng+1.2) {$\tilde\chi(\Delta^{K_3}_U)=2$};
\end{scope}
\end{tikzpicture}
\end{center}
\caption{The nucleus complexes $\Delta^{3K_2}_U$.\label{fig:nucleus-K3}}
\end{figure}
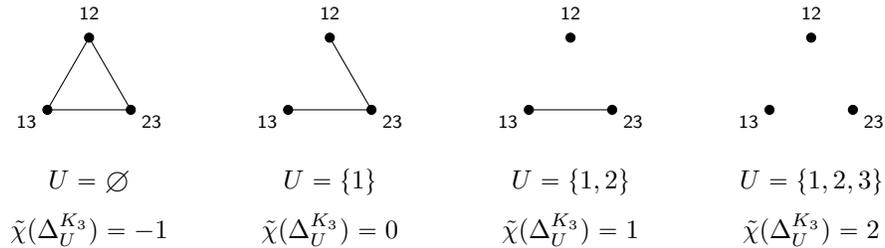
\end{example}

The definition of nucleus complexes when $|V(G)|=2$ requires special handling, for the following reason.  Consider the graph $G=cK_2$ with two vertices $v_1,v_2$ and $c>0$ parallel edges.  The subtlety is that $cK_2$ has two distinct nuclei with the same edge sets, namely the subgraphs $N_1,N_2$ with $V(N_i)=\{v_i\}$ and $E(N_i)=\0$.  Accordingly, we define $\Delta^{cK_2}_\0$ to be the $\Delta$-complex consisting of two $(c-1)$-dimensional simplices $\sigma_1,\sigma_2$ on vertex set $E(cK_2)$, glued along their boundaries (which are $(c-2)$-spheres) to produce a $(c-1)$-sphere; see Figure~\ref{fig:nucleus-K2} for the cases $c=2$ and $c=3$.  Each simplex $\sigma_i$ should be regarded as recording the complement $E(N_i)\sm E(cK_2)$.  This construction is necessary to preserve the correspondence between nuclei of $G$ and faces of $\Delta^G_\0$.  For $U\neq\0$, we can define $\Delta^{cK_2}_U$ just as in Definition~\ref{defn:u-nucleus-complex}: in particular,
\begin{align}
\Delta^{cK_2}_\0&\cong\mathbb{S}^{c-1},&
\Delta^{cK_2}_{\{v_1\}}&=\sigma_1,&
\Delta^{cK_2}_{\{v_2\}}&=\sigma_2,&
\Delta^{cK_2}_{\{v_1,v_2\}}&=\partial\sigma_1=\partial\sigma_2\cong\mathbb{S}^{c-2}, \label{K2:Delta}
\intertext{where $\cong$ means homeomorphism; $\mathbb{S}^k$ means the $k$-dimensional sphere; and $\partial \sigma_i$  is the topological boundary of $\sigma_i$, namely $ \bigcup \limits_{\gamma \subsetneq \sigma_i} \gamma$.  Therefore,}
\tilde\chi(\Delta^{cK_2}_\0)&=(-1)^{c-1},&
\tilde\chi(\Delta^{cK_2}_{\{v_1\}})&=0,&
\tilde\chi(\Delta^{cK_2}_{\{v_2\}})&=0,&
\tilde\chi(\Delta^{cK_2}_{\{v_1,v_2\}})&= (-1)^c. \label{K2:euler}
\end{align}

\begin{figure}[ht]
\begin{center}
\begin{tikzpicture}
\newcommand{\vxsize}{.075}

\foreach \x/\lab in {0/1,1/2} {
    \draw[fill=black] (\x,0) circle (\vxsize);
    \node at (\x,-.5) {$\sigma_\lab$};
}

\begin{scope}[shift={(6,0)}]
\draw (2,0) arc (0:360:2 and 0.6);
\foreach \x in {-2,2} \draw[fill=black] (\x,0) circle (\vxsize);
\node at (-1.5,.75) {$\sigma_1$};
\node at (1.5,-.75) {$\sigma_2$};
\end{scope}

\begin{scope}[shift={(12,0)}]
\shade[ball color = gray!40, opacity = 0.4] (0,0) circle (2cm);
\draw (0,0) circle (2cm);
\draw (-2,0) arc (180:360:2 and 0.6);
\draw[dashed] (2,0) arc (0:180:2 and 0.6);
\foreach \x/\y in {0/-.6, .8*2/.6*.6,  -.8*2/.6*.6} \draw[fill=black] (\x,\y) circle (\vxsize);
\node at (-.8,1.5) {$\sigma_1$};
\node at (.8,-1.5) {$\sigma_2$};
\end{scope}
\end{tikzpicture}
\caption{The nucleus complexes $\Delta^{K_2}_{\0}$ (left), $\Delta^{2K_2}_{\0}$ (middle), and $\Delta^{3K_2}_{\0}$ (right).\label{fig:nucleus-K2}}
\end{center}
\end{figure}
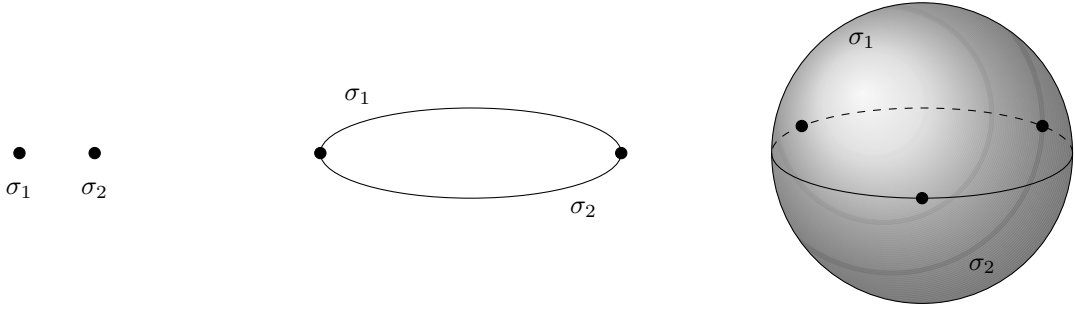
The inner sum over nuclei in Proposition~\ref{prop:elser-from-summands:0} is just $-\tilde\chi(\Delta^G_U)$, so we can rewrite Proposition~\ref{prop:elser-from-summands:0} to give a formula for Elser numbers in terms of Euler characteristics:

\begin{theorem} \label{thm:elser-from-summands}
Let $G$ be a graph and $k\geq0$ an integer.  Then
\[
\els_k(G) = (-1)^{|E(G)|+|V(G)|} \sum_{U\subseteq V(G)} \Sur(k,|U|) \;\tilde\chi(\Delta^G_U).
\]
\end{theorem}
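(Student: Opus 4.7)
The plan is to derive the theorem directly from Proposition~\ref{prop:elser-from-summands:0} by identifying the inner sum
\[
\sum_{\substack{N\in\N(G):\\ U\subseteq V(N)}} (-1)^{|E(\ov{N})|}
\]
with $-\tilde\chi(\Delta^G_U)$. Once this identification is made, substituting into Proposition~\ref{prop:elser-from-summands:0} converts the overall sign factor $(-1)^{|E(G)|+|V(G)|+1}$ into $(-1)^{|E(G)|+|V(G)|}$ and yields the claimed formula immediately.

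The key step is the bijection. Assume first that $|V(G)|\geq 3$, so that $\Delta^G_U$ is the simplicial complex of Definition~\ref{defn:u-nucleus-complex}. The assignment $N\mapsto E(G)\sm E(N)$ is, by definition, a bijection between the set $\{N\in\N(G):V(N)\supseteq U\}$ and the face set of $\Delta^G_U$, sending a nucleus $N$ to a face of dimension $|E(\ov{N})|-1$. Hence
\[
\tilde\chi(\Delta^G_U)=\sum_{\sigma\in\Delta^G_U}(-1)^{\dim\sigma}=\sum_{\substack{N\in\N(G):\\ U\subseteq V(N)}}(-1)^{|E(\ov{N})|-1}=-\sum_{\substack{N\in\N(G):\\ U\subseteq V(N)}}(-1)^{|E(\ov{N})|},
\]
which is exactly what we need.

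The delicate case is $|V(G)|=2$, where $\Delta^G_U$ is a $\Delta$-complex rather than a simplicial complex, precisely because a graph $cK_2$ has two distinct nuclei ($N_1$ and $N_2$) with empty edge set. I would handle this by direct verification against the formulas recorded in~\eqref{K2:euler}. For each of the four possible choices of $U\subseteq\{v_1,v_2\}$, one enumerates the nuclei $N$ with $V(N)\supseteq U$ (and, when $U=\emptyset$, counts $N_1$ and $N_2$ separately, as dictated by the $\Delta$-complex construction), evaluates $\sum_N(-1)^{|E(\ov{N})|}$ by an elementary binomial identity on the nonempty subsets of $E(cK_2)$, and checks that the result matches $-\tilde\chi(\Delta^{cK_2}_U)$ as given in~\eqref{K2:euler}. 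In each case both sides reduce to $\pm 1$ or $0$ and the match is immediate.

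The main (and essentially only) obstacle is bookkeeping around the $|V(G)|=2$ case: one must make sure that the $\Delta$-complex has been set up so that the cell count in each dimension agrees with the nucleus count, which is exactly the content of the construction culminating in~\eqref{K2:Delta}--\eqref{K2:euler}. Once that is in hand, combining the two cases with Proposition~\ref{prop:elser-from-summands:0} finishes the proof.
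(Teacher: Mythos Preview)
Your proposal is correct and follows essentially the same approach as the paper: the paper's proof consists of the single observation (stated immediately before the theorem) that the inner sum in Proposition~\ref{prop:elser-from-summands:0} equals $-\tilde\chi(\Delta^G_U)$, which is precisely the identification you carry out. You are simply more explicit than the paper about why the identification holds, including the $|V(G)|=2$ case, which the paper leaves implicit in the $\Delta$-complex construction leading to~\eqref{K2:euler}.
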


Nucleus complexes are well-behaved with respect to loops and cut-edges, at least at the level of Euler characteristic. Let $G\neq K_2$ be a graph and $e\in E(G)$ be a cut-edge. If one of the endpoints $x$ of $e$ has degree~1, we say that $e$ is a \defterm{leaf edge (with leaf $x$)}.

\begin{prop}
\label{elser-complex-loops-and-parallels}
Let $G$ be a graph and $U\subseteq V(G)$.
\begin{enumerate}[label=(\alph*)]
\item \label{elser-complex-loops}
If $G$ has a loop $\ell$, then $\Delta^G_U$ is a cone with cone point $\ell$.  In particular, $\tilde\chi(\Delta^G_U)=0$.
\item \label{elser-complex-parallels}
Let $D=\Dep(G)$.  Then $\tilde\chi(\Delta^D_U)=(-1)^{|E(G)|-|E(D)|}\tilde\chi(\Delta^G_U)$.
\item \label{elser-complex-leaves}
Suppose that $G\neq K_2$ and that $e$ is a cut-edge of $G$.  If $e$ is a leaf edge with leaf $x$ and $x\notin U$, then
$\Delta^G_U$ is a cone.  Otherwise, $\Delta^G_U=\Delta^{G/e}_{U/e}$.
\end{enumerate}
\end{prop}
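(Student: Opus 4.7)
For part \ref{elser-complex-loops}, the plan is to verify the cone condition directly: a loop $\ell$ is irrelevant to both the connectivity and the vertex-cover property of any nucleus containing it. For every face $\sigma=E(G)\sm E(N)$ of $\Delta^G_U$, either $\ell\in\sigma$ already, or $\ell\in E(N)$; in the latter case the subgraph $N'=(V(N),E(N)\sm\{\ell\})$ is still connected (removing a loop cannot disconnect a graph) and has the same vertex set, which still covers $G$ and still contains $U$. Hence $\sigma\cup\{\ell\}\in\Delta^G_U$, so $\ell$ is a cone point and $\tilde\chi(\Delta^G_U)=0$.

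For part \ref{elser-complex-parallels}, the plan is to reduce $G$ to $D$ by deleting one redundant parallel edge at a time. Fix parallel edges $e_1,e_2\in E(G)$ with common endpoints $v_1,v_2$ and set $G'=G\sm e_2$; I aim to prove $\tilde\chi(\Delta^G_U)=-\tilde\chi(\Delta^{G'}_U)$, so that iterating $|E(G)|-|E(D)|$ times gives the claimed sign. Using Theorem~\ref{thm:elser-from-summands}'s identification of $\tilde\chi$ with an alternating sum over nuclei, this reduces to the identity $\sum_{N\in\mathcal{A}}(-1)^{|E(N)|}=\sum_{N'\in\mathcal{A}'}(-1)^{|E(N')|}$, where $\mathcal{A}$ and $\mathcal{A}'$ are the sets of nuclei in $G$ and $G'$ with vertex set containing $U$. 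I would construct an involution on $\mathcal{A}$ that toggles the membership of $e_2$ in $E(N)$ whenever $e_1\in E(N)$ and fixes $N$ otherwise. When $e_1\in E(N)$, the endpoints $v_1,v_2$ lie in $V(N)$ and remain connected through $e_1$ regardless of $e_2$, so the toggle always produces a valid nucleus; since $|E(N)|$ changes by one, paired contributions cancel. The fixed set $\{N\in\mathcal{A}:e_1\notin E(N)\}$ is in bijection with $\mathcal{A}'$ because vertex covers of $G$ and $G'$ coincide: any cover of $G'$ contains an endpoint of $e_2$, which is also an endpoint of $e_1$.

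For part \ref{elser-complex-leaves}, let $A,B$ be the components of $G-e$ separated by the cut-edge $e=\{v,w\}$. A short case analysis shows that every nucleus $N\in\N(G)$ with $V(N)\supseteq U$ must contain $e$, \emph{unless} $e$ is a leaf edge with leaf $x$ (say $A=\{x\}$) and $x\notin U$, in which case $V(N)$ can avoid $A$ entirely and still cover $G$; the assumption $G\neq K_2$ rules out both sides being singletons simultaneously. In the exceptional case I would show $\Delta^G_U$ is a cone with cone point $e$: for any face $\sigma=E(G)\sm E(N)$ with $e\in E(N)$, removing the leaf $x$ together with its incident edge $e$ yields $N'=(V(N)\sm\{x\},E(N)\sm\{e\})$, still connected (since $x$ was a leaf in $N$), still containing $U$ (since $x\notin U$), and still covering $e$ through the surviving endpoint. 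In the remaining case every face of $\Delta^G_U$ avoids $e$, so its vertex set is $E(G)\sm\{e\}=E(G/e)$, and the map $N\mapsto N/e$ is a bijection between $\{N\in\N(G):V(N)\supseteq U\}$ and $\{N'\in\N(G/e):V(N')\supseteq U/e\}$ that preserves connectivity, vertex covers, and edge complements, giving $\Delta^G_U=\Delta^{G/e}_{U/e}$.

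The main obstacle is part \ref{elser-complex-parallels}: the sign-reversing involution there has to be designed so that toggling $e_2$ never destroys connectivity (which forces the choice of $e_1\in E(N)$ as the trigger), and the fixed-point bijection must be reconciled with the parity shift introduced by Theorem~\ref{thm:elser-from-summands}. Parts \ref{elser-complex-loops} and \ref{elser-complex-leaves}, by contrast, reduce to spotting a natural cone point or a contraction bijection once the right case distinction is made.
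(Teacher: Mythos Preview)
Your approach mirrors the paper's throughout: parts~\ref{elser-complex-loops} and~\ref{elser-complex-leaves} identify the same cone points via the same arguments, and part~\ref{elser-complex-parallels} uses the same sign-reversing involution idea (the paper phrases it on faces, you on nuclei---these are dual).

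There is one slip in~\ref{elser-complex-parallels}. Your fixed set $\{N\in\mathcal{A}:e_1\notin E(N)\}$ consists of the $U$-nuclei of $G\sm e_1$, not of $G'=G\sm e_2$: a nucleus with $e_1\notin E(N)$ may still contain $e_2$, so it need not be a subgraph of $G'$ at all, and the remark ``vertex covers of $G$ and $G'$ coincide'' does not by itself furnish the claimed bijection with $\mathcal{A}'$. The cleanest repair is to swap the roles of $e_1$ and $e_2$ in the involution: toggle $e_1$ whenever $e_2\in E(N)$. Then the fixed set $\{N\in\mathcal{A}:e_2\notin E(N)\}$ is \emph{literally} $\mathcal{A}'$, since such $N$ are connected subgraphs of $G'$ whose vertex sets cover $G$ (hence $G'$), and conversely any $U$-nucleus of $G'$ covers the parallel edge $e_1$ and therefore also $e_2$. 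Alternatively, keep your involution and add the observation that $G\sm e_1\cong G\sm e_2$ via the map swapping the two parallel edges; this induces an edge-count-preserving bijection between your fixed set and $\mathcal{A}'$.
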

\begin{proof}

\ref{elser-complex-loops} Let $s\in V(G)$ be the vertex incident to $\ell$. The vertex set of every nucleus $N\in\mathcal{N}(G)$ must contain $s$, regardless of $U$. So for all $N\in\mathcal{N}(G)$, let $N'$ be the subgraph of $G$ induced by the edges $E(N)\cup\{\ell\}$. Then $N'\in\mathcal{N}(G)$ and it follows that $\Delta_U^G$ is a cone with cone point $\ell$. Since every cone is contractible, the reduced Euler characteristic is zero.

\ref{elser-complex-parallels} By induction, it suffices to show that if $a,b$ are parallel edges in $G$ and $G'=G-b$, then
\[
\tilde\chi(\Delta^{G'}_U)=-\tilde\chi(\Delta^G_U)
\]
for every $U\subseteq V(G)$. Let
\begin{align*}
\A_0&=\{ A\in\Delta^G_U:\ a,b\not\in A\}, &
\A_a&=\{ A\in\Delta^G_U:\ a\in A,\ b\not\in A\},\\
\A_b&=\{ A\in\Delta^G_U:\ a\not\in A,\ b\in A\}, &
\A_{ab}&=\{ A\in\Delta^G_U:\ a,b\in A\}.
\end{align*}
Toggling~$a$ gives a bijection between $\A_b$ and $\A_{ab}$, so
\begin{align*}
\tilde\chi(\Delta^G_U)
&= \sum_{ A\in\A_0} (-1)^{|E(G)|-|A|}
+ \sum_{ A\in\A_a} (-1)^{|E(G)|-|A|}
+ \sum_{ A\in\A_b} (-1)^{|E(G)|-|A|}
+ \sum_{ A\in\A_{ab}} (-1)^{|E(G)|-|A|}\\
&= \sum_{ A\in\A_0} (-1)^{|E(G)|-|A|}
+ \sum_{ A\in\A_a} (-1)^{|E(G)|-|A|} \\
&= \sum_{\substack{ A\in\Delta^{G'}_U\\ a\not\in A}} (-1)^{|E(G')|+1-|A|}
+ \sum_{\substack{ A\in\Delta^{G'}_U\\ a\in A}} (-1)^{|E(G')|+1-|A|}\\
&= -\tilde\chi(\Delta^{G'}_U).
\end{align*}

\ref{elser-complex-leaves} Let $x,y$ be the endpoints of the cut-edge $e$.  Suppose that $e$ is not a leaf edge.  Then it must belong to every nucleus in $G$, because every vertex cover must include at least one vertex from each component of $G-e$.  On the other hand, every nucleus $N$ in $G/e$ must include at least one edge in each cut-component of the fused vertex $xy$, and since $N$ is connected we must have $xy\in V(N)$.  Therefore, $\Delta^G_U=\Delta^{G/e}_{U/e}$ for all $U$.

Now, suppose that $e$ is a leaf edge with leaf $x$.  Every nucleus must include $y$ in its vertex set, and toggling $e$ does not change whether an edge set is a nucleus.  Therefore, if $x\notin U$, then $\Delta^G_U$ is a cone with cone point $e$, hence has reduced Euler characteristic 0.  If $x\in U$ then every $U$-nucleus must include the edge $e$, so $\Delta^G_U=\Delta^{G/e}_{U/e}$.
\end{proof}

Parts~\ref{elser-complex-loops} and~\ref{elser-complex-parallels} have immediate consequences for Elser numbers, which we now state as a corollary.  (Part~\ref{elser-complex-leaves} will be useful in computing Elser numbers for trees in the next section.)

\begin{cor}\label{cor:simplify-els}
Let $G$ be a graph.
\begin{enumerate}[label=(\alph*)]
\item If $G$ contains a loop, then $\els_k(G)=0$ for all $k$.
\item For all $k$, we have $\els_k(G)=\els_k(\Dep(G))$.
\end{enumerate}
\end{cor}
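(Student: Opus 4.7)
The plan is to deduce both parts directly from Theorem~\ref{thm:elser-from-summands} by invoking the corresponding parts of Proposition~\ref{elser-complex-loops-and-parallels}; no new combinatorics is needed.

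For part~(a), if $G$ contains a loop then Proposition~\ref{elser-complex-loops-and-parallels}\ref{elser-complex-loops} says that $\Delta^G_U$ is a cone for every $U\subseteq V(G)$, so $\tilde\chi(\Delta^G_U)=0$. Substituting into the formula of Theorem~\ref{thm:elser-from-summands} kills every summand and yields $\els_k(G)=0$.

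For part~(b), set $D=\Dep(G)$ and note that $V(D)=V(G)$, so the index sets $U\subseteq V(G)$ and $U\subseteq V(D)$ coincide. Proposition~\ref{elser-complex-loops-and-parallels}\ref{elser-complex-parallels} gives
\[
\tilde\chi(\Delta^D_U) = (-1)^{|E(G)|-|E(D)|}\,\tilde\chi(\Delta^G_U)
\]
for each such $U$. Applying Theorem~\ref{thm:elser-from-summands} to $D$, pulling the constant sign $(-1)^{|E(G)|-|E(D)|}$ outside the sum, and combining it with the prefactor $(-1)^{|E(D)|+|V(D)|}$ produces $(-1)^{|E(G)|+|V(G)|}$, which is exactly the prefactor appearing in the formula for $\els_k(G)$. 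Therefore $\els_k(D)=\els_k(G)$.

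The only obstacle is sign bookkeeping: verifying that the two $(-1)^{|E(D)|}$ factors cancel so that the parity ends up matching the expression for $\els_k(G)$. Because Proposition~\ref{elser-complex-loops-and-parallels} is stated uniformly across ordinary simplicial complexes and the $\Delta$-complex construction used when $|V(G)|=2$, no separate treatment of small cases is required.
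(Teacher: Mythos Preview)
Your proposal is correct and follows exactly the route the paper intends: the paper gives no explicit proof, stating only that parts~\ref{elser-complex-loops} and~\ref{elser-complex-parallels} of Proposition~\ref{elser-complex-loops-and-parallels} have ``immediate consequences for Elser numbers,'' and your argument spells out precisely those consequences via Theorem~\ref{thm:elser-from-summands}. The sign bookkeeping in part~(b) is handled correctly.
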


\section{Elser numbers for trees}
\label{sec:trees}

In Example~\ref{ex:closed-formulas}~\ref{ex:tree}, we obtained a formula for the Elser numbers of a tree that depends only on the numbers of vertices and leaves.  This formula has the disadvantage that its sign is not obvious.  On the other hand, we can use Theorem~\ref{thm:elser-from-summands} to give a formula for $k\geq 1$ which is obviously nonnegative.

\begin{prop} \label{prop:tree-euler}
Let $T$ be a tree with two or more vertices, let $U\subseteq V(T)$, and let $L$ denote the set of leaves of $T$. Then:
\[\tilde\chi(\Delta^T_U) = \begin{cases}
1 & \text{ if } T=K_2\text{ and } |U|=0,\\
0 & \text{ if } T=K_2\text{ and }|U|=1,\\
-1 & \text{ if } T=K_2\text{ and }|U|=2,\\
0 & \text{ if } T\neq K_2\text{ and }L\not\subseteq U,\\
-1 & \text{ if } T\neq K_2\text{ and }L\subseteq U.
\end{cases}\]
\end{prop}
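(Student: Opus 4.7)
The plan is to verify the proposition case by case, with a single induction on $|V(T)|$ doing the bulk of the work.

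First, the three subcases with $T = K_2$ follow immediately by reading off equation~\eqref{K2:euler} with $c = 1$, so no work is needed there.

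Next, for $T \neq K_2$ (so $|V(T)| \geq 3$) with $L \not\subseteq U$, I would pick a leaf $x \in L \setminus U$. Its unique incident edge $e$ is both a cut-edge (every edge of a tree is a cut-edge) and a leaf edge with leaf $x \notin U$, so Proposition~\ref{elser-complex-loops-and-parallels}\ref{elser-complex-leaves} immediately gives that $\Delta^T_U$ is a cone, yielding $\tilde\chi(\Delta^T_U) = 0$.

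The remaining case, $T \neq K_2$ and $L \subseteq U$, I would handle by induction on $|V(T)|$. Pick any leaf $x$ of $T$ with incident edge $e = \{x, y\}$. Since $x \in U$, Proposition~\ref{elser-complex-loops-and-parallels}\ref{elser-complex-leaves} yields $\Delta^T_U = \Delta^{T/e}_{U/e}$. Writing $T' = T/e$, the key verification is that $L(T') \subseteq U/e$: any leaf $v$ of $T'$ is either a leaf of $T$ distinct from $x$ (hence $v \in U \subseteq U/e$) or else the contracted vertex $xy$, which arises only when $\deg_T(y) = 2$ and in any case lies in $U/e$ because $x \in U$. If $T' = K_2$ then both its vertices belong to $U/e$ and the first case supplies $\tilde\chi = -1$; otherwise the inductive hypothesis applies to $T'$ directly. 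I expect the only (mild) obstacle to be this leaf-tracking under contraction, which ultimately just unpacks the definition of $U/e$ and the fact that contracting a leaf edge cannot turn a non-leaf into an old leaf that lies outside $U$.
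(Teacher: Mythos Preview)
Your argument is correct. The first four cases coincide exactly with the paper's proof: citing~\eqref{K2:euler} for $T=K_2$, and invoking Proposition~\ref{elser-complex-loops-and-parallels}\ref{elser-complex-leaves} to see that $\Delta^T_U$ is a cone when some leaf lies outside~$U$.

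The only divergence is in the final case $T\neq K_2$, $L\subseteq U$. You proceed by induction, contracting a leaf edge and carefully tracking how the leaf set and $U$ behave under contraction; this works, and your verification that $L(T')\subseteq U/e$ is sound. The paper instead dispatches this case in one line: if a connected subgraph $N$ of a tree $T$ contains every leaf of $T$, then $N=T$ (since every edge of $T$ lies on some leaf-to-leaf path). Hence the only $U$-nucleus is $T$ itself, $\Delta^T_U=\{\emptyset\}$, and $\tilde\chi=-1$ immediately. The paper's route avoids induction and the leaf-tracking bookkeeping entirely; yours is a perfectly valid alternative, just a bit longer.
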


\begin{proof}
The first three cases are a restatement of~\eqref{K2:euler}.  On the other hand, suppose that $|V(T)|\geq3$.  If $L \not\subseteq U$, Proposition~\ref{elser-complex-loops-and-parallels}~\ref{elser-complex-leaves} implies $\Delta^T_U$ is a cone and therefore $\tilde\chi(\Delta_U^T) = 0$. When $L \subseteq U$, the only connected subgraph of $T$ containing $L$ is $T$ itself. Thus $\Delta^G_U=\{\emptyset\}$  and so the reduced Euler characteristic is $-1$.
\end{proof}

Ultimately we will reduce the general graph problem to the case of tree graphs, so Proposition~\ref{prop:tree-euler} will be crucial for the proof of Elser's conjecture.  We now give a formula for $\els_k(T)$ when $T$ is a tree.

\begin{cor}\label{cor:elser-for-trees}
Let $k \geq 1$. Let $T$ be a tree with $n$ vertices and $\ell$ leaves. Then
\[
\els_k(T) =\displaystyle\sum_{i=0}^{n-\ell}\binom{n-\ell}{i}\Sur(k,\ell+i)
\]
In particular, Elser's conjecture is true for trees. That is, $\els_k(T)\geq 0$.
\end{cor}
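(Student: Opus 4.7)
The plan is to combine Theorem~\ref{thm:elser-from-summands} with the Euler-characteristic calculation in Proposition~\ref{prop:tree-euler}, since the latter already pins down $\tilde\chi(\Delta^T_U)$ for every $U \subseteq V(T)$. The sign prefactor in Theorem~\ref{thm:elser-from-summands} is $(-1)^{|E(T)|+|V(T)|} = (-1)^{(n-1)+n} = -1$, so
\[
\els_k(T) = -\sum_{U \subseteq V(T)} \Sur(k,|U|)\,\tilde\chi(\Delta^T_U).
\]
First I would handle the case $T \neq K_2$ (that is, $n \geq 3$). Proposition~\ref{prop:tree-euler} tells us that $\tilde\chi(\Delta^T_U) = 0$ unless $L \subseteq U$, in which case $\tilde\chi(\Delta^T_U) = -1$. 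Thus only $U$ of the form $U = L \cup S$ with $S \subseteq V(T)\sm L$ contribute, each with sign $-(-1) = +1$ and weight $\Sur(k,\ell+|S|)$. Grouping by $i = |S|$ and using that $|V(T)\sm L| = n-\ell$ yields exactly
\[
\els_k(T) = \sum_{i=0}^{n-\ell} \binom{n-\ell}{i}\Sur(k,\ell+i).
\]

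Next I would verify the boundary case $T = K_2$, where $n = \ell = 2$ and $n-\ell = 0$, so the proposed formula reduces to the single term $\Sur(k,2)$. Plugging the $\tilde\chi$ values from~\eqref{K2:euler} into Theorem~\ref{thm:elser-from-summands} directly (and using $\Sur(k,0) = \Sur(k,1) \cdot 0 = 0$ for $k \geq 1$) gives $\els_k(K_2) = \Sur(k,2) = 2^k - 2$, in agreement both with the formula and with Example~\ref{ex:K2}.

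Finally, nonnegativity is immediate: every binomial coefficient is nonnegative and every surjection count $\Sur(k,\ell+i)$ is nonnegative, so the whole sum is $\geq 0$. (As a sanity check, when $k = 1$ and $n \geq 2$ we have $\ell + i \geq \ell \geq 2$, so $\Sur(1,\ell+i) = 0$ and the formula recovers $\els_1(T) = 0$ from Theorem~\ref{thm:els}\ref{mainthm:1}.) There is no real obstacle here: the work has already been done in Theorem~\ref{thm:elser-from-summands} and Proposition~\ref{prop:tree-euler}, and the only subtlety is to remember to treat $K_2$ separately because Proposition~\ref{prop:tree-euler} lists it as a special case with a different sign pattern, and to keep the sign prefactor $(-1)^{|E(T)|+|V(T)|}$ straight so that the minus signs from $\tilde\chi = -1$ cancel correctly to produce a manifestly nonnegative sum.
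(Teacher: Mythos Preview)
Your proof is correct and follows essentially the same approach as the paper: apply Theorem~\ref{thm:elser-from-summands}, use that $(-1)^{|E(T)|+|V(T)|}=-1$ for a tree, invoke Proposition~\ref{prop:tree-euler} to reduce the sum to subsets $U\supseteq L$, and count those subsets by $|U\cap(V(T)\sm L)|$. Your separate treatment of $K_2$ is more explicit than the paper's, but unnecessary, since for $U\neq\emptyset$ the formula $-\tilde\chi(\Delta^T_U)=[L\subseteq U]$ from Proposition~\ref{prop:tree-euler} already holds uniformly (both vertices of $K_2$ are leaves), and the $U=\emptyset$ term is killed by $\Sur(k,0)=0$ for $k\geq1$.
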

\begin{proof}
Theorem \ref{thm:elser-from-summands} gives
\[
\els_k(T) = (-1)^{|E(T)|+|V(T)|}\sum_{U\subseteq V(T)} \Sur(k,|U|)~\tilde{\chi}(\Delta_U^T).
\]
Since $T$ is a tree, we have
\(
|E(T)|+|V(T)| = 2|V(T)|-1
\)
and thus
\begin{align*}
\els_k(T) & = -\sum_{U\subseteq V(T)} \Sur(k,|U|)~\tilde{\chi}(\Delta_U^T)\\
& = \sum_{U\subseteq V(T)} \Sur(k,|U|)~\left[-\tilde{\chi}(\Delta_U^T)\right].
\end{align*}
Let $L$ denote the set of leaves of $T$. By Proposition \ref{prop:tree-euler}, for $U \neq \0$,
\[
-\tilde\chi(\Delta^T_U) = \begin{cases}
0 & \text{ if } L\not\subseteq U,\\
1 & \text{ if } L\subseteq U
\end{cases}.
\]
Then
\begin{align*}
\els_k(T)& =\displaystyle\sum_{\substack{U \subseteq V(T)\\L \subseteq U}} \Sur(k,|U|)\\
&=\displaystyle\sum_{i=0}^{|V(T)|-|L|}\binom{|V(T)|-|L|}{i}\Sur(k,|L|+i).\qedhere
\end{align*}
\end{proof}

\section{A deletion-contraction recurrence for nucleus complexes}
\label{section:deletion-contraction}
In this section, we develop a deletion-contraction recurrence for reduced Euler characteristics of nucleus complexes of an arbitrary connected graph~$G$.  In general, the main technical tool is a simple bijection $\psi_e$ relating the nucleus complexes of $G$, $G/e$, and $G\sm e$.  Special care must be taken for small graphs, because of the difficulty in defining the nucleus complex of $cK_2$.

Let $G$ be a connected graph and $e$ an edge of $G$  which is neither a loop nor a cut-edge.  Define a map $\psi_e: 2^{E(G)} \rightarrow 2^{E(G \sm e)} \dju 2^{E(G/e)}$ as follows:
\[\psi_e(A)=\begin{cases}
A \sm e \subseteq E(G \sm e) & \textrm{ if $e \in A$,}\\
A \subseteq E(G/e) & \textrm{ if $e \not\in A$.}
\end{cases}\]
We will abbreviate $\psi_e$ by $\psi$.  Note that $\psi$ sends complements of nuclei to complements of nuclei and $\psi$ is a bijection with inverse given by
\[\psi^{-1}(B)=\begin{cases}
B\cup e & \textrm{ for $B\subseteq E(G\sm e)$,}\\
B & \textrm{ for $B\subseteq E(G/e)$.}
\end{cases}\]

Notice that we have not assumed that $G$ is a simple graph, only that $e$ is not a loop or cut-edge. In particular, $\psi$ is well-defined and a bijection even if there is another edge in $G$ with the same endpoints as $e$.  The key technical properties of $\psi$ we will need are as follows.

\begin{prop} \label{all-about-psi}
Let $G$ be a graph, let $e=\{x,y\}$ be an edge of $G$ which is neither a loop nor a cut-edge, and let $U\subseteq V(G)$.  Assume that either (i) $|V(G)|\geq 4$, or (ii) $|V(G)|=3$ and $U\neq\0$.  Then:
\begin{align} \label{psi:12}
\psi\left( \Delta^G_U \right) & \subseteq \Delta^{G\sm e}_U\dju\Delta^{G/e}_{U/e},
~\text{and}\\
\label{psi:5}
\{B \in \Delta^{G\sm e}_U: \psi^{-1}(B)\not\in \Delta^G_U\} & = \{B \in \Delta^{G/e}_{U/e}: \psi^{-1}(B)\not\in \Delta^G_U\}.
\end{align}
\end{prop}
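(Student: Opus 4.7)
The plan is to prove both containments by direct construction of witnessing nuclei, and to establish \eqref{psi:5} by identifying $E(G\sm e)$ and $E(G/e)$ as the common edge set $E(G)\sm\{e\}$ and tracking how a given subset $B$ of this set corresponds to nuclei on both sides.

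For \eqref{psi:12}, I would fix $A = E(G)\sm E(N) \in \Delta^G_U$ with $V(N)\supseteq U$ and split on whether $e\in A$. If $e\in A$, then $e\notin E(N)$, so $N\subseteq G\sm e$ is still connected, and $V(N)$ remains a vertex cover of $G\sm e$, giving $N\in\N(G\sm e)$ and $\psi(A)=A\sm\{e\}\in\Delta^{G\sm e}_U$. If $e\notin A$, then $e\in E(N)$, and I would pass to $N/e\subseteq G/e$, checking (i) that $N/e$ inherits connectivity from $N$, (ii) that any edge of $G/e$ lifts to an edge of $G$ whose $V(N)$-endpoint corresponds to a vertex of $V(N/e)=(V(N)\sm\{x,y\})\cup\{xy\}$, and (iii) that $V(N/e)\supseteq U/e$ by inspection of whether $U\cap\{x,y\}=\0$. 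Then $\psi(A)=A\in\Delta^{G/e}_{U/e}$.

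For \eqref{psi:5}, I would fix $B\subseteq E(G)\sm\{e\}$ and let $N_1$ and $N_2$ denote the subgraphs of $G\sm e$ and $G/e$, respectively, with edge set $(E(G)\sm\{e\})\sm B$. The central observation is that $V(N_2)$ is obtained from $V(N_1)$ by identifying $x$ and $y$ to the merged vertex $xy$, so $xy\in V(N_2)$ if and only if $V(N_1)\cap\{x,y\}\neq\0$. In the regime $V(N_1)\cap\{x,y\}=\0$, one has $U\cap\{x,y\}=\0$ (since $U\subseteq V(N_1)$), hence $U/e=U$ and $V(N_1)=V(N_2)$; since $G\sm e$ and $G/e$ share the same edge set, the conditions ``$N_1\in\N(G\sm e)$ with $V(N_1)\supseteq U$'' and ``$N_2\in\N(G/e)$ with $V(N_2)\supseteq U/e$'' become equivalent. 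I would then analyze $\psi^{-1}(B)\notin\Delta^G_U$ on each side: for $B\in\Delta^{G\sm e}_U$, the subgraph of $G$ with edge set $E(N_1)$ is connected and contains $U$, failing to be a nucleus only when $V(N_1)$ does not cover $e$, i.e., when $V(N_1)\cap\{x,y\}=\0$; for $B\in\Delta^{G/e}_{U/e}$, the subgraph of $G$ with edge set $E(N_2)\cup\{e\}$ covers $G$ and contains $U$ in every case, but is disconnected exactly when $xy\notin V(N_2)$ (the cluster $V(N_2)$ and the new edge $e$ lie in distinct components). Both conditions reduce to $V(N_1)\cap\{x,y\}=\0$, yielding the set equality.

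The main obstacle I anticipate is the bookkeeping around vertex sets under contraction, particularly verifying that in the ``bad'' regime $V(N_1)\cap\{x,y\}=\0$ the same edge complement $B$ simultaneously witnesses membership in both $\Delta^{G\sm e}_U$ and $\Delta^{G/e}_{U/e}$. The hypothesis $|V(G)|\geq 4$, or $|V(G)|=3$ with $U\neq\0$, is what rules out the situation where $G/e\cong cK_2$ with $U/e=\0$; this is precisely the case where two distinct single-vertex nuclei share the same empty edge set, forcing the $\Delta$-complex formalism of~\eqref{K2:Delta} and breaking the clean inclusion in \eqref{psi:12}.
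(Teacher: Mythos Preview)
Your proposal is correct and follows essentially the same approach as the paper. For \eqref{psi:12} the arguments are identical. For \eqref{psi:5}, the paper packages the argument as a single chain of ten equivalent conditions (a)--(j), while you analyze each side separately and show both reduce to the common condition $V(N_1)\cap\{x,y\}=\0$ (equivalently $xy\notin V(N_2)$); this is exactly the paper's pivot step (e)$\iff$(f), with your LHS analysis matching (a)--(e) and your RHS analysis matching (f)--(j).
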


The assumption in the proposition avoids the difficulties in defining $\Delta^{cK_2}_\0$, which can arise from contractions if $|V(G)|\geq3$.

\begin{proof}
To prove~\eqref{psi:12}, let $A\in\Delta_U^G$. If $e\in A$, then $\psi(A) = A\backslash e$ and so $E(G\backslash e)\backslash \psi(A) = E(G\backslash e)\backslash (A\backslash e) = E(G)\backslash A$ is a $U$-nucleus of $G$ not containing $e$. On the other hand, if $e\not\in A$ then $e\in E(G)\backslash A$ and $\psi(A) = A$, so
\begin{align*}
E(G/ e)\backslash \psi(A) = E(G\backslash e)\backslash A.
\end{align*}
Since contraction preserves connectedness and the property of being a vertex cover, the set $E(G\backslash e)\backslash A$ is a $(U/e)$-nucleus. Thus
\[
\psi(A)\in
\begin{cases}
\Delta_U^{G\backslash e} & \text{if }e\in A\\
\Delta_{U/e}^{G/e} & \text{if }e\not\in A,
\end{cases}
\]
which proves \eqref{psi:12}.

To prove~\eqref{psi:5}, suppose that $B\subseteq E(G)\sm e$, so that $B$ can be regarded as a set of edges of any of $G$, $G\sm e$, or $G/e$.  Let $\hat B=E(G\sm e)\sm B = E(G)\sm(B\cup e)$, let $W=V_G(\hat B)$, and let $W'=V_{G/e}(\hat B)$.  Then the following conditions are equivalent:

\begin{enumerate}[label=(\alph*)]
\item $B \in \Delta^{G\sm e}_U$ and $\psi^{-1}(B)\not\in \Delta^G_U$.
\item $B \in \Delta^{G\sm e}_U$ and $B \cup e\not\in \Delta^G_U$.
\item $\hat B$ is a $U$-nucleus of $G\sm e$, but $\hat{B}$ is not a $U$-nucleus of $G$.
\item $\hat{B}$ is a $U$-nucleus of $G \sm e$, but $W$ is a not a vertex cover of $G$.
\item$\hat{B}$ is a $U$-nucleus of $G \sm e$ and $x,y \not\in W$
\item $\hat{B}$ is a $U/e$-nucleus of $G/e$ and $xy \not\in W'$
\item $\hat B$ is a $U/e$-nucleus of $G/e$, but $\hat B\cup e$ is not a connected subgraph of $G$.
\item $\hat B$ is a $U/e$-nucleus of $G/e$, but $\hat B\cup e$ is not a $U$-nucleus of $G$.
\item $B \in \Delta^{G/e}_{U/e}$ and $B \not\in \Delta^G_U$.
\item $B \in \Delta^{G/e}_{U/e}$ and $\psi^{-1}(B)\not\in \Delta^G_U$.
\end{enumerate}

For $(c) \iff (d)$, regarding a subgraph $\hat{B}$ of $G \setminus e$ as a subgraph of $G$ cannot change connectedness or the vertex set, but may change whether the vertex set is a vertex cover. So if $\hat{B}$ is not a $U$-nucleus of $G$, $W$ must not be a vertex cover of $G$. 

For $(e) \iff (f)$, first note that if $\hat B$ is connected in $G \sm e$ then it is connected in $G/e$. On the other hand, if $x,y \not\in W'$ and $\hat B$ is connected in $G/e$, then $\hat B$ is connected in $G \sm e$. Finally, the property of being a vertex cover is preserved in both directions here.

For $(g) \iff (h)$, if $W'$ is a vertex cover of $G/e$, then $W \cup \{x,y\}=V_G(\hat{B} \cup e)$ must be a vertex cover of $G$. Additionally, $U/e \subseteq W'$ implies $U \subseteq W \cup \{x,y\}$. So if $\hat{B} \cup e$ is not a $U$-nucleus of $G$, it must be disconnected. 
The equivalences $a\iff b\iff c$, $d\iff e$, $f\iff g$, and $h\iff i \iff j$ follow directly from the definitions of nucleus, nucleus complex, and $\psi^{-1}$.
\end{proof}
Now we state and prove the main deletion/contraction recurrence.

\begin{theorem} \label{thm:delete-contract}
Let $G$ be an arbitrary connected graph with $|V(G)|\geq2$.  Let $e\in E(G)$ be neither a loop nor a cut-edge, and let $U \subseteq V(G)$. Then
\[\Tilde{\chi}(\Delta^G_U)=\Tilde{\chi}(\Delta^{G/e}_{U/e})-\Tilde{\chi}(\Delta^{G \sm e}_U).\]
\end{theorem}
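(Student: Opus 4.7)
The plan is to exploit the bijection $\psi = \psi_e$ of Proposition~\ref{all-about-psi}, which partitions $\Delta^G_U$ into faces containing $e$ and faces not containing $e$, sending the first class into $\Delta^{G\sm e}_U$ by stripping $e$ and the second identically into $\Delta^{G/e}_{U/e}$. First I would split the reduced Euler characteristic along $e$:
\[
\tilde\chi(\Delta^G_U) = \sum_{\substack{A\in\Delta^G_U\\ e\in A}}(-1)^{|A|-1} + \sum_{\substack{A\in\Delta^G_U\\ e\notin A}}(-1)^{|A|-1}.
\]
Reindexing each sum by $B=\psi(A)$ — stripping $e$ decreases cardinality by one in the first sum, and cardinality is preserved in the second — gives
\[
\tilde\chi(\Delta^G_U) = -\sum_{B\in\psi(\{A\in\Delta^G_U:\,e\in A\})}(-1)^{|B|-1} + \sum_{B\in\psi(\{A\in\Delta^G_U:\,e\notin A\})}(-1)^{|B|-1}.
\]

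Next I would invoke Proposition~\ref{all-about-psi} to identify these image sets. By~\eqref{psi:12}, the first sum runs over a subset of $\Delta^{G\sm e}_U$ and the second over a subset of $\Delta^{G/e}_{U/e}$. The faces omitted from each full complex are exactly those $B$ with $\psi^{-1}(B)\notin\Delta^G_U$, and~\eqref{psi:5} asserts that these two sets of missing $B$ coincide; call the common set $\mathcal{M}$. Completing each sum to its full complex therefore introduces the same correction $\sum_{B\in\mathcal{M}}(-1)^{|B|-1}$ on both sides, but with opposite signs, so the two corrections cancel and we are left with $\tilde\chi(\Delta^G_U)=\tilde\chi(\Delta^{G/e}_{U/e})-\tilde\chi(\Delta^{G\sm e}_U)$, as desired.

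The main obstacle will be the cases excluded by the hypothesis of Proposition~\ref{all-about-psi}, namely $|V(G)|=2$ and $|V(G)|=3$ with $U=\emptyset$. When $|V(G)|=2$, the assumption that $e$ is not a cut-edge forces $G=cK_2$ with $c\geq 2$, so that $G\sm e=(c-1)K_2$ while $G/e$ has a loop; the identity follows immediately from the explicit values listed in~\eqref{K2:euler} together with Proposition~\ref{elser-complex-loops-and-parallels}\ref{elser-complex-loops}. When $|V(G)|=3$ and $U=\emptyset$, contraction produces a two-vertex graph, and the identity can be verified directly from the same explicit formulas, first reducing via Proposition~\ref{elser-complex-loops-and-parallels}\ref{elser-complex-parallels} if $G$ has parallel edges. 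These are finite case checks that require no new ideas.
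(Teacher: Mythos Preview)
Your proposal is correct and follows essentially the same route as the paper: the core computation splitting $\tilde\chi(\Delta^G_U)$ according to whether $e\in A$, reindexing via $\psi$, and using~\eqref{psi:12} and~\eqref{psi:5} to cancel the ``missing'' faces is exactly the paper's argument. The only difference is organizational: the paper first disposes of the cases where $G$ has a loop or where $e$ has a parallel edge (via Proposition~\ref{elser-complex-loops-and-parallels}), which streamlines the remaining small-graph checks, whereas you fold these into your terse treatment of $|V(G)|\le 3$; both work.
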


\begin{proof}
If $G$ has a loop, then the recurrence is trivially true by Proposition~\ref{elser-complex-loops-and-parallels}~\ref{elser-complex-loops}.

If $G$ has another edge parallel to $e$ (so that contracting $e$ produces a loop), then $\Tilde{\chi}(\Delta^{G/e}_{U/e})=0$ and $\Tilde{\chi}(\Delta^{G}_U)=-\Tilde{\chi}(\Delta^{G \sm e}_U)$ by Proposition~\ref{elser-complex-loops-and-parallels}~\ref{elser-complex-parallels}, implying the recurrence.

If $|V(G)|=2$ and $G$ has no loop, then $G=cK_2$ for some $c\geq1$.  If $c=1$, then no such edge~$e$ exists and the theorem is vacuously true.  If $c>1$, then $G/e$ has a loop, so $\Tilde{\chi}(\Delta^{G/e}_{U/e})=0$ by Proposition~\ref{elser-complex-loops-and-parallels}~\ref{elser-complex-loops}, and the desired recurrence reduces to $\Tilde{\chi}(\Delta^G_U)=-\Tilde{\chi}(\Delta^{G \sm e}_U)=0$, which follows from~\eqref{K2:euler}.

One more case requires special handling.  Suppose that $\Dep(G)=K_3$ and $U=\0$ (so that Proposition~\ref{all-about-psi} does not apply), and that no other edges are parallel to $e$.  Let $a$ and $b$ be the sizes of the other two parallel classes; note that $a,b>0$.  Then $G/e=(a+b)K_2$ and $G-e$ is a graph whose deparallelization is a 3-vertex path.  By Proposition~\ref{elser-complex-loops-and-parallels}~\ref{elser-complex-parallels} together with Proposition~\ref{prop:tree-euler}
and Example~\ref{ex:closed-formulas}~\ref{ex:C3}, we have
\[
\tilde\chi(\Delta^G_\0) = (-1)^{a+b-1},\qquad
\tilde\chi(\Delta^{G/e}_\0) = (-1)^{a+b-1},\qquad
\tilde\chi(\Delta^{G\sm e}_\0) = 0,
\]
so the desired recurrence is satisfied.

In all other cases, the pair $G,U$ satisfies the hypothesis of Proposition~\ref{all-about-psi}, so
\begin{align*}
\Tilde{\chi}(\Delta^G_U) &= \sum_{A \in \Delta^G_U} (-1)^{|A|+1}\\
&=\sum_{\substack{A \in \Delta^G_U:\\e \not\in A}} (-1)^{|A|+1}+\sum_{\substack{A \in \Delta^G_U:\\ e \in A}} (-1)^{|A|+1}\\
&=\sum_{\substack{A \in \Delta^G_U:\\e \not\in A}} (-1)^{|\psi(A)|+1}-\sum_{\substack{A \in \Delta^G_U:\\ e \in A}} (-1)^{|\psi(A)|+1}\\
&=\sum_{\substack{B\in\Delta^{G/e}_{U/e}:\\ \psi^{-1}(B)\in \Delta^G_U}} (-1)^{|B|+1}-\sum_{\substack{B\in\Delta^{G\sm e}_U:\\ \psi^{-1}(B) \in \Delta^G_U}} (-1)^{|B|+1} && \text{(by~\eqref{psi:12})}\\
&=\left(\sum_{B\in\Delta^{G/e}_{U/e}} (-1)^{|B|+1} -
\sum_{\substack{B\in\Delta^{G/e}_{U/e}:\\ \psi^{-1}(B)\not\in \Delta^G_U}} (-1)^{|B|+1}\right)
-\left(\sum_{B\in\Delta^{G\sm e}_U} (-1)^{|B|+1} -
\sum_{\substack{B\in\Delta^{G\sm e}_U:\\ \psi^{-1}(B) \not\in \Delta^G_U}} (-1)^{|B|+1}\right)\\
&=\sum_{B\in\Delta^{G/e}_{U/e}} (-1)^{|B|+1} -\sum_{B\in\Delta^{G\sm e}_U} (-1)^{|B|+1} && \text{(by~\eqref{psi:5})}\\
&=\Tilde{\chi}(\Delta^{G/e}_{U/e})-\Tilde{\chi}(\Delta^{G \sm e}_U).\qedhere
\end{align*}
\end{proof}

\section{Proof of Theorem \ref{thm:els}}
\label{sec:proof-of-conj}

In this section, we combine Theorem~\ref{thm:elser-from-summands}, Proposition~\ref{prop:tree-euler}, and Proposition~\ref{thm:delete-contract} (the deletion-contraction recurrence) to prove Elser's conjecture for all connected graphs.  The idea is to repeatedly apply Proposition~\ref{thm:delete-contract} to edges that are neither loops nor cut-edges, so as to write $\tilde\chi(\Delta^G_U)$ as a signed sum of expressions $\tilde\chi(\Delta^{T_i}_{U_i})$.  Here the graphs $T_i$ are \defterm{tree minors} of $G$, i.e., trees obtained from $G$ by a sequence of deletions and/or contractions.  It will turn out that the signs in this sum are all the same, which will imply immediately that the Elser numbers $\els_k(G)$ are positive for all $k\geq 2$.  This computation can be recorded by a binary tree, which we call a \defterm{restricted deletion/contraction tree}, or RDCT.  To illustrate this idea, we begin with an example.

\begin{example}
Let $G$ be the graph shown below, with the subset $U\subseteq V(G)$ indicated by hollow red circles.
\begin{center}
  \begin{tikzpicture}[scale=1.25]
  \node[point] (A) at (0,0){};
  \node[redpoint] (B) at (0,1){};
  \node[redpoint]  (C) at (1,1){};
  \node[point]  (D) at (1,0){};

  \path[-] (A) edge node[left] {$\mathsf{a}$} (B);
  \path[-] (B) edge node[above] {$\mathsf{b}$} (C);
  \path[-] (A) edge node[below] {$\mathsf{d}$} (D);
  \path[-] (C) edge node[right] {$\mathsf{c}$}(D);
  \path[-] (A) edge node[above] {$\mathsf{e}$} (C);
  \end{tikzpicture}
\end{center}
We can calculate $\tilde\chi(\Delta^G_U)$ by repeated applications of Theorem~\ref{thm:delete-contract}.  One possible set of minors of~$G$ obtained from the recurrence is recorded by the RDCT $\fancyB$ shown in Figure~\ref{impressive-figure}.  The non-leaf nodes of $\fancyB$ are the minors~$H$ with no loops and at least one non-cut-edge $s$; the left and right children are $H\sm s$ and $H/s$ respectively.  The identity of $s$ in each case should be clear from the diagram.  The vertices in $U[H]$ are indicated by hollow red circles; observe that changing the original subset $U\subseteq V(G)$ would change the sets $U[H]$, but not the graphs $H$ themselves.  
The reduced Euler characteristics of the complexes $\Delta^T_U$ are indicated by the numbers at the bottom right of each box. The recurrence stops when it reaches a graph $T$ that is either a tree, in which case Theorem~\ref{thm:delete-contract} does not apply, or has a loop, so that $\tilde\chi(\Delta^T_U)=0$ for all $U$ by Proposition~\ref{elser-complex-loops-and-parallels}(1).  These graphs are precisely the leaves of $\fancyB$.

\begin{figure}[hb]
\begin{center}
\begin{tikzpicture}
\newcommand{\blobsize}{.08}
\newcommand{\vsep}{-2.75}

\newcommand{\hshiftonea}{-4.5}
\newcommand{\hshiftoneb}{4.5}
\newcommand{\hshifttwoa}{-6}
\newcommand{\hshifttwob}{-3}
\newcommand{\hshifttwoc}{3}
\newcommand{\hshifttwod}{6}
\newcommand{\hshiftthreea}{-5}
\newcommand{\hshiftthreeb}{-2}
\newcommand{\hshiftthreec}{2}
\newcommand{\hshiftthreed}{5}
\newcommand{\hshiftfoura}{3.5}
\newcommand{\hshiftfourb}{6.5}+

\begin{scope}[shift={(.5,.5)}]
\draw[thick, dashed]
   (0,0) -- (\hshiftonea,\vsep)
   (0,0) -- (\hshiftoneb,\vsep)
   (\hshiftonea,\vsep) -- (\hshifttwoa,2*\vsep)
   (\hshiftonea,\vsep) -- (\hshifttwob,2*\vsep)
   (\hshiftoneb,\vsep) -- (\hshifttwoc,2*\vsep)
   (\hshiftoneb,\vsep) -- (\hshifttwod,2*\vsep)
   (\hshifttwob,2*\vsep) -- (\hshiftthreea,3*\vsep)
   (\hshifttwob,2*\vsep) -- (\hshiftthreeb,3*\vsep)
   (\hshifttwoc,2*\vsep) -- (\hshiftthreec,3*\vsep)
   (\hshifttwoc,2*\vsep) -- (\hshiftthreed,3*\vsep)
   (\hshiftthreed,3*\vsep) -- (\hshiftfoura,4*\vsep)
   (\hshiftthreed,3*\vsep) -- (\hshiftfourb,4*\vsep);
\end{scope}

\node at (.5,1.25*\vsep) {\Huge$\fancyB$};

\node at (.5,2) {\Large$G$};
\draw[fill=white] (-.6,-.6) rectangle (1.6,1.6); 
\draw[thick] (0,0) -- (0,1) -- (1,1) -- (1,0) -- (0,0) -- (1,1);
\foreach \x in {0,1} \draw[fill=black] (\x,0) circle (\blobsize);
\foreach \x in {0,1} \draw[red, very thick, fill=white] (\x,1) circle (\blobsize);
\node at (-.2,.5) {$\mathsf{a}$};
\node at (.5,1.2) {$\mathsf{b}$};
\node at (1.2,.5) {$\mathsf{c}$};
\node at (.5,-.2) {$\mathsf{d}$};
\node at (.35,.65) {$\mathsf{e}$};
\node at (1.3,-.3) {\Blue{$\mathsf{+1}$}};

\begin{scope}[shift={(\hshiftonea,\vsep)}]
\draw[fill=white] (-.6,-.6) rectangle (1.6,1.6);
\draw[thick] (0,0) -- (0,1) -- (1,1) -- (1,0)  (0,0) -- (1,1);
\foreach \x in {0,1} \draw[fill=black] (\x,0) circle (\blobsize);
\foreach \x in {0,1} \draw[red, very thick, fill=white] (\x,1) circle (\blobsize);
\node at (-.2,.5) {$\mathsf{a}$};
\node at (.5,1.2) {$\mathsf{b}$};
\node at (1.2,.5) {$\mathsf{c}$};
\node at (.35,.65) {$\mathsf{e}$};
\node at (1.3,-.3) {\Blue{$\mathsf{0}$}};
\end{scope}

\begin{scope}[shift={(\hshiftoneb,\vsep)}] 
\draw[fill=white] (-.6,-.6) rectangle (1.6,1.6);
\draw[thick] (0,0) -- (0,1) -- (1,1) -- (0,0);
\draw[thick] (0,0) to[bend right] (1,1);
\draw[fill=black] (0,0) circle (\blobsize);
\foreach \x in {0,1} \draw[red, very thick, fill=white] (\x,1) circle (\blobsize);
\node at (-.2,.5) {$\mathsf{a}$};
\node at (.5,1.2) {$\mathsf{b}$};
\node at (.8,.2) {$\mathsf{c}$};
\node at (.35,.65) {$\mathsf{e}$};
\node at (1.3,-.3) {\Blue{$\mathsf{-1}$}};
\end{scope}

\begin{scope}[shift={(\hshifttwoa,2*\vsep)}] 
\node at (-1,.5) {\Large$T_1$};
\draw[fill=white] (-.6,-.6) rectangle (1.6,1.6);
\draw[thick] (1,0) --  (1,1) -- (0,1) -- (0,0);
\foreach \x in {0,1} \draw[fill=black] (\x,0) circle (\blobsize);
\foreach \x in {0,1} \draw[red, very thick, fill=white] (\x,1) circle (\blobsize);
\node at (-.2,.5) {$\mathsf{a}$};
\node at (.5,1.2) {$\mathsf{b}$};
\node at (1.2,.5) {$\mathsf{c}$};
\node at (1.3,-.3) {\Blue{$\mathsf{0}$}};
\end{scope}

\begin{scope}[shift={(\hshifttwob,2*\vsep)}] 
\draw[fill=white] (-.6,-.6) rectangle (1.6,1.6);
\draw[thick] (1,1) -- (1,0);
\draw[thick] (0,1) to[bend right] (1,1);
\draw[thick] (0,1) to[bend left] (1,1);
\draw[fill=black] (1,0) circle (\blobsize);
\foreach \x in {0,1} \draw[red, very thick, fill=white] (\x,1) circle (\blobsize);
\node at (.5,.65) {$\mathsf{a}$};
\node at (.5,1.35) {$\mathsf{b}$};
\node at (1.2,.5) {$\mathsf{c}$};
\node at (1.3,-.3) {\Blue{$\mathsf{0}$}};
\end{scope}

\begin{scope}[shift={(\hshifttwoc,2*\vsep)}] 
\draw[fill=white] (-.6,-.6) rectangle (1.6,1.6);
\draw[thick] (0,0) -- (0,1) -- (1,1);
\draw[thick] (0,0) to[bend right] (1,1);
\draw[fill=black] (0,0) circle (\blobsize);
\foreach \x in {0,1} \draw[red, very thick, fill=white] (\x,1) circle (\blobsize);
\node at (-.2,.5) {$\mathsf{a}$};
\node at (.5,1.2) {$\mathsf{b}$};
\node at (.8,.2) {$\mathsf{c}$};
\node at (1.3,-.3) {\Blue{$\mathsf{+1}$}};
\end{scope}
\begin{scope}[shift={(\hshifttwod,2*\vsep)}] 
\node at (2,.5) {\Large$T_7$};
\draw[fill=white] (-.6,-.6) rectangle (1.6,1.6);
\draw[thick] (0,1) to[bend right] (1,1);
\draw[thick] (0,1) to[bend left] (1,1);
\draw[thick] (1.2,1) circle (.2);  
\foreach \x in {0,1} \draw[red, very thick, fill=white] (\x,1) circle (\blobsize);
\node at (.5,.65) {$\mathsf{a}$};
\node at (.5,1.35) {$\mathsf{b}$};
\node at (1.2,.65) {$\mathsf{c}$};
\node at (1.3,-.3) {\Blue{$\mathsf{0}$}};
\end{scope}

\begin{scope}[shift={(\hshiftthreea,3*\vsep)}] 
\node at (-1,.5) {\Large$T_2$};
\draw[fill=white] (-.6,-.6) rectangle (1.6,1.6);
\draw[thick] (1,1) -- (1,0);
\draw[thick] (0,1) to[bend right] (1,1);
\foreach \x in {0,1} \draw[red, very thick, fill=white] (\x,1) circle (\blobsize);
\draw[fill=black] (1,0) circle (\blobsize);
\node at (.5,.65) {$\mathsf{a}$};
\node at (1.2,.5) {$\mathsf{c}$};
\node at (1.3,-.3) {\Blue{$\mathsf{0}$}};
\end{scope}
\begin{scope}[shift={(\hshiftthreeb,3*\vsep)}]  
\node at (2,.5) {\Large$T_3$};
\draw[fill=white] (-.6,-.6) rectangle (1.6,1.6);
\draw[thick] (1,1) -- (1,0);
\draw[thick] (.8,1) circle (.2);  
\draw[red, very thick, fill=white] (1,1) circle (\blobsize);
\draw[fill=black] (1,0) circle (\blobsize);
\node at (.4,1) {$\mathsf{a}$};
\node at (1.2,.5) {$\mathsf{c}$};
\node at (1.3,-.3) {\Blue{$\mathsf{0}$}};
\end{scope}
\begin{scope}[shift={(\hshiftthreec,3*\vsep)}] 
\node at (-1,.5) {\Large$T_4$};
\draw[fill=white] (-.6,-.6) rectangle (1.6,1.6);
\draw[thick] (0,0) -- (0,1);
\draw[thick] (0,0) to[bend right] (1,1);
\foreach \x in {0,1} \draw[red, very thick, fill=white] (\x,1) circle (\blobsize);
\draw[fill=black] (0,0) circle (\blobsize);
\node at (-.2,.5) {$\mathsf{a}$};
\node at (.8,.2) {$\mathsf{c}$};
\node at (1.3,-.3) {\Blue{$\mathsf{-1}$}};
\end{scope}

\begin{scope}[shift={(\hshiftthreed,3*\vsep)}] 
\draw[fill=white] (-.6,-.6) rectangle (1.6,1.6);
\draw[thick] (0,0) to[bend right] (1,1);
\draw[thick] (0,0) to[bend left] (1,1);
\draw[red, very thick, fill=white] (1,1) circle (\blobsize);
\draw[fill=black] (0,0) circle (\blobsize);
\node at (.2,.8) {$\mathsf{a}$};
\node at (.8,.2) {$\mathsf{c}$};
\node at (1.3,-.3) {\Blue{$\mathsf{0}$}};
\end{scope}

\begin{scope}[shift={(\hshiftfoura,4*\vsep)}] 
\node at (-1,.5) {\Large$T_5$};
\draw[fill=white] (-.6,-.6) rectangle (1.6,1.6);
\draw[thick] (0,0) to[bend right] (1,1);
\draw[red, very thick, fill=white] (1,1) circle (\blobsize);
\draw[fill=black] (0,0) circle (\blobsize);
\node at (.8,.2) {$\mathsf{c}$};
\node at (1.3,-.3) {\Blue{$\mathsf{0}$}};
\end{scope}
\begin{scope}[shift={(\hshiftfourb,4*\vsep)}] 
\node at (2,.5) {\Large$T_6$};
\draw[fill=white] (-.6,-.6) rectangle (1.6,1.6);
\draw[thick] (.8,1) circle (.2);  
\draw[red, very thick, fill=white] (1,1) circle (\blobsize);
\node at (.4,1) {$\mathsf{c}$};
\node at (1.3,-.3) {\Blue{$\mathsf{0}$}};
\end{scope}

\end{tikzpicture}
\end{center}
\caption{A restricted deletion/contraction tree (RDCT).  Each node is a minor $H$ of $G$.  Vertices in $U[H]$ are indicated by hollow red circles.  The values of $\tilde\chi(\Delta^H_{U[H]})$ are indicated at the bottom right of each box. \label{impressive-figure}}
\end{figure}
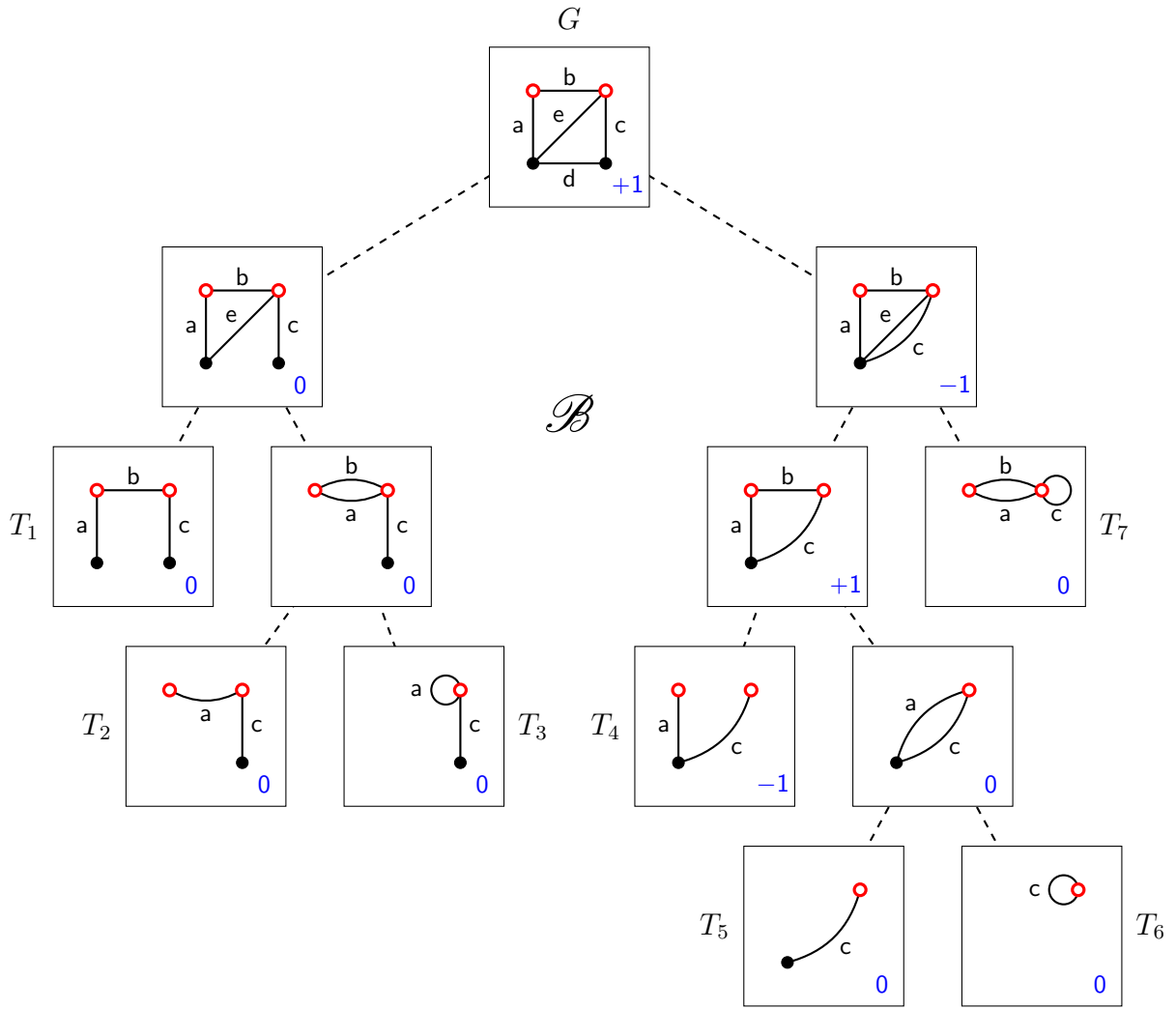
For the tree shown in Figure~\ref{impressive-figure}, we obtain
\begin{equation*}
\Tilde{\chi}(\Delta^G_U)=\sum_{i=1}^7 \Tilde{\chi}(\Delta^{T_i}_{U_i})
\end{equation*}
where the $T_i$ are the leaves of $\fancyB$ and $U_i=U[T_i]$.  The graphs $T_3$, $T_6$, $T_7$ have loops and the summands therefore vanish.  In the other cases, by Theorem \ref{thm:delete-contract}, each deletion changes the sign of the reduced Euler characteristic and each contraction preserves the sign.  So the sign of the $i^{th}$ term is $(-1)^{d_i}$, where $d_i$ is the number of edge deletions required to obtain $T_i$ from $G$. In this case, $d_i=2$ for every $i$, so all the signs are positive; as we will shortly see, this is not an accident.
\end{example}

There are many other possibilities for the RDCT $\fancyB$, depending on which non-cut-edge is chosen at each branch.  Nevertheless, any RDCT for $G$ can be used to compute~$\Tilde{\chi}(\Delta^G_U)$ for any $U$, by giving rise to an equation of the form
\begin{equation} \label{boiled-down-to-trees}
\Tilde{\chi}(\Delta^G_U)=\sum_{i=1}^s \ep_i  ~ \Tilde{\chi}(\Delta^{T_i}_{U[T_i]})
\end{equation}
where $\{T_1,\dots,T_s\}$ is a family of tree minors of $G$, all with at least two vertices and $\ep_i\in\{\pm1\}$ for each~$i$.  The left-hand side is clearly independent of the choice of RDCT; we will see another non-obvious invariant of all RDCTs in Proposition~\ref{thm:els:2}.
In all cases, $|U[T_i]|\leq|U|$, and $U[T_i]=\0$ if and only if $U=\0$.  Moreover, each $T_i$ has at least two vertices because $K_1$ cannot be obtained from a larger simple graph by deleting or contracting a non-cut-edge. Similarly, every maximal sequence consisting of only contractions will eventually contain a loop, when the corresponding summand in~\eqref{boiled-down-to-trees} is 0.

\begin{prop} \label{prop:sign}
In every expression of the form~\eqref{boiled-down-to-trees} arising from an RDCT for~$G$, we have $\ep_i=(-1)^{|E(G)|-|V(G)|+1}$ for all $i\in[s]$.  That is,
\begin{equation} \label{distilled}
\Tilde{\chi}(\Delta^G_U)=(-1)^{|E(G)|-|V(G)|+1}\sum_{i=1}^s \Tilde{\chi}(\Delta^{T_i}_{U[T_i]}).
\end{equation}
\end{prop}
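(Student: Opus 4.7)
The plan is to track, for each leaf $T_i$ of the RDCT, both the sign $\epsilon_i$ and the elementary graph statistics $|V(T_i)|$ and $|E(T_i)|$, then observe that these are constrained so tightly by the tree condition that $\epsilon_i$ depends only on $G$.

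First I would read off the sign contribution from Theorem~\ref{thm:delete-contract}: the recurrence is
\[
\tilde\chi(\Delta^G_U) \;=\; \tilde\chi(\Delta^{G/e}_{U/e}) \;-\; \tilde\chi(\Delta^{G\sm e}_U),
\]
so along the path in $\fancyB$ from the root to the leaf $T_i$, every left (deletion) step multiplies by $-1$ and every right (contraction) step multiplies by $+1$. Writing $d_i$ for the number of deletions and $c_i$ for the number of contractions on this path, we have $\epsilon_i = (-1)^{d_i}$.

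Next I would count vertices and edges. Each contraction reduces the vertex count by exactly one (the two endpoints of the contracted edge are identified, and since the edge is a non-loop the count drops by one); deletions do not change the vertex count. Hence $|V(T_i)| = |V(G)| - c_i$. Each deletion and each contraction removes exactly one edge (the edge acted upon), so $|E(T_i)| = |E(G)| - d_i - c_i$. Since $T_i$ is a tree with $|V(T_i)| \geq 2$ vertices, $|E(T_i)| = |V(T_i)| - 1$. Substituting:
\[
|E(G)| - d_i - c_i \;=\; |V(G)| - c_i - 1,
\]
which rearranges to $d_i = |E(G)| - |V(G)| + 1$. Note that $c_i$ cancels, so the parity of $d_i$ is forced by $G$ alone; in particular every tree leaf has the \emph{same} number of deletions on its path, namely the cyclomatic number of $G$. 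Therefore $\epsilon_i = (-1)^{|E(G)|-|V(G)|+1}$ for every $i$, and~\eqref{distilled} follows immediately from~\eqref{boiled-down-to-trees}.

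There is no real obstacle here; the only thing to be mindful of is that the leaves enumerated on the right-hand side of~\eqref{boiled-down-to-trees} are exactly the tree minors at leaves of $\fancyB$ (the loop-containing leaves of the RDCT contribute $0$ by Proposition~\ref{elser-complex-loops-and-parallels}\ref{elser-complex-loops} and so have been dropped from the sum), and one must verify that the vertex/edge bookkeeping above is valid along the entire path, which it is precisely because every edge acted on in the RDCT is required to be neither a loop nor a cut-edge (so non-loop, hence contraction drops vertex count by one).
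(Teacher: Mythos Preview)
Your proof is correct and follows essentially the same approach as the paper: both arguments count deletions along the path to $T_i$ by tracking how vertex and edge counts change under deletion versus contraction, then invoke the tree relation $|E(T_i)|=|V(T_i)|-1$ to force $d_i=|E(G)|-|V(G)|+1$. Your write-up is in fact slightly more explicit (introducing $c_i$ and $d_i$ separately and noting that loop-containing leaves have already been dropped), but the substance is identical.
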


\begin{proof}
Fix $i\in[s]$ and let $T=T_i$.  Obtaining $T$ as a minor of $G$ requires removing a total of $|E(G)|-|E(T)|$ edges via either deletion or contraction.  The number of edges contracted must be $|V(G)|-|V(T)|$, because deletion preserves the number of vertices while contraction reduces it by~1.  Therefore, the number of edges deleted is $(|E(G)|-|E(T)|)-(|V(G)|-|V(T)|)$. Since $T$ is a tree this equals $|E(G)|-|V(G)|+1$ and so $|V(T)|=|E(T)|-1$. The recurrence of Theorem~\ref{thm:delete-contract} implies that the sign $\ep(T)$ is the number of edges deleted.
\end{proof}

\begin{remark}
Proposition~\ref{prop:sign} can also be proved topologically. Consider the homology group $H_1(G;\R)\cong\R^{|E|-|V|+1}$ (where $G$ is regarded as a 1-dimensional cell complex).  Each edge contraction is a homotopy equivalence, hence preserves $H_1$, while each edge deletion lowers the rank of $H_1$ by one.  Since $H_1(T;\R)=0$, the number of deletions must be $|E|-|V|+1$.
\end{remark}

Using these tools, we can now determine the sign of the Elser numbers $\els_k(G)$ for all $G$ and $k$.  The cases $k=0$, $k=1$, and $k\geq 2$ need to be treated separately.

\begin{prop}\label{prop:euler-all}
For any graph $G$ with two or more vertices and any $U\subseteq V(G)$, we have
\begin{enumerate}[label=(\alph*)]
\item If $U=\emptyset$, then \((-1)^{|E(G)|+|V(G)|}\tilde{\chi}(\Delta^G_U) \leq 0\).
\item \label{prop:euler-all:singleton} If $|U|=1$, then \((-1)^{|E(G)|+|V(G)|}\tilde{\chi}(\Delta^G_U) =0\).  (This is \cite[Lemma~1]{Elser}.)
\item If $|U| \geq 1$, then \((-1)^{|E(G)|+|V(G)|}\tilde{\chi}(\Delta^G_U) \geq 0_{}\).
\end{enumerate}
\end{prop}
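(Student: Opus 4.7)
The plan is to bolt together equation~(\ref{distilled}) from Proposition~\ref{prop:sign} and the tree computation in Proposition~\ref{prop:tree-euler}, then do a case check on $|U|$. First I would fix any RDCT for $G$, which exists by the discussion preceding Proposition~\ref{prop:sign}, and use~(\ref{distilled}) to write
\[\tilde\chi(\Delta^G_U) \;=\; (-1)^{|E(G)|-|V(G)|+1}\sum_{i=1}^s \tilde\chi(\Delta^{T_i}_{U[T_i]}),\]
where $T_1,\dots,T_s$ are the tree minors appearing at the leaves of the RDCT and each has at least two vertices. Multiplying through by $(-1)^{|E(G)|+|V(G)|}$ collapses the signs (the exponents sum to $2|E(G)|+1$), yielding
\[(-1)^{|E(G)|+|V(G)|}\tilde\chi(\Delta^G_U) \;=\; -\sum_{i=1}^s \tilde\chi(\Delta^{T_i}_{U[T_i]}).\]
So the whole problem is reduced to reading off the signs of the summands, given the already-noted facts that $|U[T_i]|\leq|U|$ and $U[T_i]=\emptyset$ iff $U=\emptyset$.

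For part (a), $U=\emptyset$ forces $U[T_i]=\emptyset$ for every $i$, and Proposition~\ref{prop:tree-euler} gives $\tilde\chi(\Delta^{T_i}_{U[T_i]})=1$ when $T_i=K_2$ and $0$ otherwise. Each summand is nonnegative, so $-\sum_i\tilde\chi(\Delta^{T_i}_{U[T_i]})\leq 0$, proving (a). For part (b), $|U|=1$ forces $|U[T_i]|=1$ for every $i$. Applying Proposition~\ref{prop:tree-euler}: if $T_i=K_2$ the summand is $0$ by the second line of the case list, while if $|V(T_i)|\geq 3$ then $T_i$ has at least two leaves, so $L\not\subseteq U[T_i]$ and the summand is again $0$. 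The sum vanishes. For part (c), $|U|\geq 1$ forces $|U[T_i]|\geq 1$, and Proposition~\ref{prop:tree-euler} shows $\tilde\chi(\Delta^{T_i}_{U[T_i]})\in\{-1,0\}$ in every remaining sub-case (the $K_2$ cases with $|U|\in\{1,2\}$ give $0$ and $-1$; the $|V(T_i)|\geq 3$ cases give $-1$ or $0$ depending on whether $L\subseteq U[T_i]$). Each summand is nonpositive, so its negation is $\geq 0$, proving (c).

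There is no single hard step here; all the heavy lifting has already been done by Proposition~\ref{prop:sign} and Proposition~\ref{prop:tree-euler}. The main thing I would take care with is the bookkeeping: verifying that the RDCT really does produce an expression of the form~(\ref{distilled}) with each $T_i$ a tree on at least two vertices, and that the implication ``$U\neq\emptyset\Rightarrow U[T_i]\neq\emptyset$'' genuinely holds for every $T_i$ appearing in the sum, since this is what separates case (b) from the $|U|=0$ sub-cases of Proposition~\ref{prop:tree-euler}. Once that is in hand, the three cases are immediate from the sign table.
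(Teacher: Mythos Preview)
Your proposal is correct and follows essentially the same approach as the paper's proof: both rewrite~\eqref{distilled} as $(-1)^{|E(G)|+|V(G)|}\tilde\chi(\Delta^G_U)=-\sum_i\tilde\chi(\Delta^{T_i}_{U[T_i]})$ and then read off the signs of the summands from Proposition~\ref{prop:tree-euler}, using the facts $|U[T_i]|\le|U|$ and $U[T_i]=\emptyset\iff U=\emptyset$. Your case analysis is slightly more explicit than the paper's, but the argument is the same.
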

\begin{proof}
Let $\fancyB$ be an RDCT for~$G$, with leaves labeled by tree minors $T_1,\dots,T_s$. Then~\eqref{distilled} may be rewritten as
\[
(-1)^{|E(G)|-|V(G)|}\tilde{\chi}(\Delta^G_U)=\sum_{i=1}^s -\Tilde{\chi}(\Delta^{T_i}_{U[T_i]}).
\]
By Prop.~\ref{prop:tree-euler}, the summand $-\Tilde{\chi}(\Delta^{T_i}_{U[T_i]})$ equals $-1$ only if $U[T_i]=\0$ (so $U=\0$),
and it equals $+1$ only if $|U[T_i]|\geq2$ (so $|U|\geq2$ as well).
\end{proof}

Now we are ready to prove our main theorem:

\begin{reptheorem}{thm:els}
Let $G$ be a connected graph with at least two vertices, and let $k\geq 0$ be an integer.  Then:
\begin{enumerate}[label=(\alph*)]
\item If $k=0$, then $\els_0(G) \leq 0$.
\item If $k=1$, then $\els_k(G)=0$.  (This is \cite[Theorem~2]{Elser}.)
\item If $k>1$, then $\els_k(G)\geq 0$.  That is, Elser's conjecture holds.
\end{enumerate}
\end{reptheorem}

\begin{proof}
By Theorem~\ref{thm:elser-from-summands}, we have
\begin{equation} \label{denouement-all}
\els_k(G) = \sum_{U\subseteq V(G)} \Sur(k,|U|) \:(-1)^{|E(G)|+|V(G)|} \, \tilde\chi(\Delta^G_U)
\end{equation}
and by Proposition~\ref{prop:euler-all} all summands are nonpositive, zero, or nonnegative according as $k=0$, $k=1$, or $k>1$, implying the result.
\end{proof}

\section{Proof of Theorem~\ref{thm:strict-positivity}}
\label{sec:positivity}

We now consider the question of exactly when the inequalities in (a) and (c) of Theorem~\ref{thm:els} are strict; equivalently, when $\els_k(G)\neq 0$.  We will treat the cases $k=0$ and $k=2$ separately.  Recall from Corollary~\ref{cor:simplify-els} that if $G$ contains a loop, then $\els_k(G)=0$ for all~$k$, and that $\els_k(G)=\els_k(\Dep(G))$.  Therefore, we lose no generality by assuming throughout this section that $G$ is simple.  We begin with the combinatorial interpretation of Elser numbers that can be extracted from the work of the previous section.

\begin{prop} \label{thm:els:2}
Let $G$ be a connected graph with at least two vertices, and let $\fancyB$ be any RDCT for~$G$, whose leaves are tree minors $T_1,\dots,T_s$.  Then:
\begin{enumerate}[label=(\alph*)]
\item\label{els:zero-condition} $\els_0(G)=-\#\{i:\ T_i\cong K_2\}$.
\item\label{els:nonzero-condition} For $k\geq 2$, the following are equivalent:
\begin{itemize}
\item $\els_k(G)>0$;
\item there exists some $U\subseteq V(G)$ such that $|U|\leq k$ and $\tilde\chi(\Delta^G_U)\neq 0$;
\item some tree minor $T_i$ has at most $k$ leaves.
\end{itemize}
\end{enumerate}
\end{prop}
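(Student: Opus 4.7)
The plan is to derive both parts from Theorem~\ref{thm:elser-from-summands}, the distilled formula~\eqref{distilled}, and the tree computation of Proposition~\ref{prop:tree-euler}.

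For part~\ref{els:zero-condition}, setting $k=0$ in Theorem~\ref{thm:elser-from-summands} collapses the sum to its $U=\emptyset$ term, since $\Sur(0,|U|)=0$ for $|U|\geq 1$. This gives $\els_0(G)=(-1)^{|E(G)|+|V(G)|}\tilde\chi(\Delta^G_\emptyset)$. Substituting~\eqref{distilled} collapses all signs and yields $\els_0(G)=-\sum_i \tilde\chi(\Delta^{T_i}_\emptyset)$. By Proposition~\ref{prop:tree-euler}, the inner term equals $1$ when $T_i\cong K_2$ (the $|U|=0$ case there) and $0$ otherwise (since $T_i\neq K_2$ has nonempty leaf set not contained in $\emptyset$), giving the claimed count.

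For part~\ref{els:nonzero-condition}, I prove the cyclic chain (i)$\Rightarrow$(ii)$\Rightarrow$(iii)$\Rightarrow$(i) among the three bulleted conditions. Implication (i)$\Rightarrow$(ii) follows directly from Theorem~\ref{thm:elser-from-summands} and Proposition~\ref{prop:euler-all}: for $k\geq 2$ every summand in~\eqref{denouement-all} is nonnegative, so $\els_k(G)>0$ forces some $U$ with both $\Sur(k,|U|)>0$ (hence $|U|\leq k$) and $\tilde\chi(\Delta^G_U)\neq 0$. Implication (ii)$\Rightarrow$(iii) applies~\eqref{distilled}: a nonzero $\tilde\chi(\Delta^G_U)$ forces some summand $\tilde\chi(\Delta^{T_i}_{U[T_i]})\neq 0$, and Proposition~\ref{prop:tree-euler} then gives either $T_i\cong K_2$ (which has $2\leq k$ leaves) or $L(T_i)\subseteq U[T_i]$, whence $|L(T_i)|\leq|U[T_i]|\leq|U|\leq k$.

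The main content is implication (iii)$\Rightarrow$(i). Suppose $T_i$ has $\ell\leq k$ leaves. Writing $T_i=G/C\sm D$ for disjoint edge subsets $C,D\subseteq E(G)$, each vertex $w\in V(T_i)$ corresponds to a nonempty block $V_w\subseteq V(G)$ (its preimage under the contraction $G\to G/C$), and these blocks partition $V(G)$. I choose $U\subseteq V(G)$ by selecting one representative $v_w\in V_w$ for each leaf $w\in L(T_i)$; when $T_i\cong K_2$ both vertices of $T_i$ are leaves, so this yields $|U|=2$. Tracing through the inductive definition of $U[H]$ shows $U[T_i]=L(T_i)$, so Proposition~\ref{prop:tree-euler} gives $\tilde\chi(\Delta^{T_i}_{U[T_i]})=-1$. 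Since $|U|\geq 2$, each other summand $-\tilde\chi(\Delta^{T_j}_{U[T_j]})$ in~\eqref{distilled} is nonnegative (Proposition~\ref{prop:tree-euler} only produces a $-1$ when $U[T_j]=\emptyset$), so $(-1)^{|E(G)|+|V(G)|}\tilde\chi(\Delta^G_U)\geq 1$. Combined with $\Sur(k,|U|)>0$ and the nonnegativity of all other terms in~\eqref{denouement-all}, this forces $\els_k(G)>0$. The main obstacle is verifying $U[T_i]=L(T_i)$ in this construction, which reduces to the observation that the image of a set under iterated contraction is determined entirely by which contraction blocks it meets.
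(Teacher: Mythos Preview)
Your proof is correct and follows essentially the same approach as the paper: both combine Theorem~\ref{thm:elser-from-summands} with~\eqref{distilled} and Proposition~\ref{prop:tree-euler} to analyze which summands can be nonzero, and both construct the witness $U$ for the reverse direction by pulling back the leaf set $L(T_i)$ along the surjection $V(G)\to V(T_i)$. The only difference is presentational: the paper first packages~\eqref{denouement-all} and~\eqref{distilled} into a single double sum (equation~\eqref{barrel-aged}) and argues directly that it has a nonzero summand iff some $T_i$ has at most $k$ leaves, whereas you spell out the cyclic chain (i)$\Rightarrow$(ii)$\Rightarrow$(iii)$\Rightarrow$(i) through the intermediate condition on $\tilde\chi(\Delta^G_U)$.
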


\begin{proof}
Substituting~\eqref{distilled} into~\eqref{denouement-all} gives
\begin{equation} \label{barrel-aged}
\els_k(G) = -\sum_{U\subseteq V(G)} \Sur(k,|U|) \,
\sum_{i=1}^s \Tilde{\chi}(\Delta^{T_i}_{U[T_i]}).
\end{equation}

When $k=0$, equation~\eqref{barrel-aged} simplifies to
\[\els_0(G) = -\sum_{i=1}^s \Tilde{\chi}(\Delta^{T_i}_{\0})\]
which, together with Proposition~\ref{prop:tree-euler}, implies part~\ref{els:zero-condition}.

When $k\geq 2$, all nonzero summands in~\eqref{barrel-aged} must have $1\leq|U|\leq k$ (so that $\Sur(k,|U|)\neq0$) and $U[T_i]\supseteq L_i$ (by Proposition~\ref{prop:tree-euler}), where $L_i$ is the set of leaves of $T_i$.
In particular $|L_i|\leq k$.  On the other hand, if $T_i$ is a tree minor with $\leq k$ leaves occurring as a leaf node of $\fancyB$, then one can pull $L_i$ back under the surjection $V(G)\to V(T_i)$ to obtain a set $U\subseteq V(G)$ with $U[T_i]=L_i$ and $|U|=|U[T_i]|$, so~\eqref{barrel-aged} does indeed have a nonzero summand.
\end{proof}

Proposition~\ref{thm:els:2} is unsatisfactory in that it depends on the choice of a restricted deletion/contraction tree for $G$.  We wish to remove this dependence and give a criterion for nonvanishing that depends only on $G$ itself.  Accordingly, the next goal is to show that \emph{every} tree minor of $G$ appears as a leaf of \emph{some} RDCT.

We begin by recalling some of the theory of 2-connected graphs; see, e.g., \cite[chapter~4.2]{West}.  An \defterm{ear decomposition} of a graph $G$ is a list of subgraphs $R_1,\dots,R_m$ such that
\begin{enumerate}
\item $E(G)=E(R_1)\cup \cdots \cup E(R_m)$;
\item $R_1$ is a cycle; and
\item for each $i>1$, the graph $R_i$ is a path that meets $R_1\cup\cdots\cup R_{i-1}$ only at its endpoints.
\end{enumerate}
It is known that $G$ is 2-connected if and only if it has an ear decomposition (\cite[Thm.~4.2.8]{West}, attributed to Whitney).  Most graphs have many ear decompositions; for instance, $R_0$ can be taken to be any cycle in $G$.  It is easily seen that $m=|E(G)|-|V(G)|+1$, the number of edges in the complement of a spanning tree $T$, suggesting that it ought to be possible to construct an ear decomposition by (essentially) adding a fundamental cycle of $T$ in every iteration.  (A fundamental cycle for $T$ consists of an edge $xy\not\in T$ together with the unique path in $T$ from $x$ to $y$.)

\begin{theorem} \label{all-ears}
Let $G$ be a 2-connected graph and let $T\subseteq G$ be a spanning tree.  Then $G$ has an ear decomposition $R_1\cup\cdots\cup R_m$ such that $|E(R_i)\sm T|=1$ for every $i$.
\end{theorem}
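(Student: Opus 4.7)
The proof proceeds by induction on $|V(G)|$. For the base case $|V(G)|=2$, the graph $G$ is a bouquet of $c\geq 2$ parallel edges, $T$ is a single edge, and one takes $R_1$ to be the $2$-cycle formed by $T$ together with one non-tree edge, with each remaining ear $R_2,\dots,R_{c-1}$ a single non-tree edge.

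For the inductive step with $|V(G)|\geq 3$, I would first handle the case where $G$ has a vertex $v$ of degree $2$ by a \emph{smoothing} reduction. Letting $x,y$ be the two neighbors of $v$, form $G'=(G-v)\cup\{\{x,y\}\}$ (permitting a multi-edge if $\{x,y\}$ was already present), and define $T'$ so that $|E(G')\sm T'|=m$: if both edges at $v$ were in $T$, declare the new edge $\{x,y\}\in T'$; otherwise declare it non-tree. Then $G'$ is $2$-connected with $|V(G')|=|V(G)|-1$ and the same number $m$ of non-tree edges, so the inductive hypothesis gives an ear decomposition of $G'$ with each ear containing exactly one edge of $E(G')\sm T'$. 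The new edge $\{x,y\}$ lies in a unique ear $R'_j$; replacing it with the path $x-v-y$ in $R'_j$ produces an ear decomposition of $G$ in which each ear contains exactly one edge of $E(G)\sm T$, since the replacement swaps a single non-tree (resp.\ tree) edge in $G'$ for one non-tree and one tree edge (resp.\ two tree edges) in $G$.

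If instead $G$ has minimum degree $\geq 3$, so that no degree-$2$ smoothing is available, the plan is to find a non-tree edge $e\in F=E(G)\sm T$ such that $G\sm e$ remains $2$-connected; then the inductive hypothesis applied to $(G\sm e,T)$ yields an ear decomposition $R_1,\dots,R_{m-1}$, and appending $R_m=\{e\}$ as a single-edge final ear completes the construction. Proving the existence of such an $e$ is the main obstacle. By Halin's theorem, a $2$-connected graph with minimum degree $\geq 3$ is not minimally $2$-connected, so at least one edge of $G$ is removable without destroying $2$-connectivity; the subtle point is to ensure this edge lies outside $T$. I would tackle this via an edge-swap argument: if every removable edge happened to be a tree edge $t$, one would pick a non-tree edge $f$ whose fundamental cycle passes through $t$ and consider the alternative spanning tree $T\sm\{t\}\cup\{f\}$. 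An analysis of the fundamental cycles of $F\setminus\{f\}$ in the modified tree, together with the $2$-vertex cuts they must cross in order for every non-tree edge to be critical, should yield a contradiction and hence exhibit a removable non-tree edge with respect to the original $T$.
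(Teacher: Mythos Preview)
Your approach is genuinely different from the paper's: rather than building the ear decomposition greedily from a fundamental cycle (as the paper does), you try to reduce $G$ to a smaller graph by either smoothing a degree-$2$ vertex or deleting a removable non-tree edge, then pull the ear decomposition back. The smoothing case is handled correctly.

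There are two problems, one minor and one serious. The minor one: your induction cannot be on $|V(G)|$, since deleting a non-tree edge does not decrease the vertex count. You would need to induct on $|E(G)|$ (both reductions decrease it by one).

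The serious gap is exactly the step you flag as ``the main obstacle'': showing that when $\delta(G)\geq 3$ there is a \emph{non-tree} edge $e$ with $G\sm e$ still $2$-connected. Halin's theorem only gives you a removable edge, with no control over whether it lies in $T$. Your proposed swap $T\mapsto T\sm\{t\}\cup\{f\}$ converts a removable tree edge $t$ into a removable non-tree edge, but only with respect to the \emph{new} tree $T'$; it says nothing about $T$. The sentence ``should yield a contradiction'' is not an argument, and I do not see how to complete it along those lines: the essential (non-removable) edges of a $2$-connected graph with $\delta\geq 3$ can interact with an arbitrary spanning tree in complicated ways, and there is no obvious invariant that a single tree-swap improves. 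Note that the claim you need is in fact \emph{equivalent} to the theorem in the $\delta\geq 3$ case (the last ear of any compatible decomposition must be a single non-tree edge), so proving it independently is essentially as hard as the original problem.

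The paper sidesteps this difficulty entirely by constructing the decomposition forward: at each stage $G_i$, it uses $2$-connectivity of $G$ (not of any minor) to locate an edge $e\notin T$ straddling the current subgraph, and uses the tree $T$ itself to supply the paths from the endpoints of $e$ back into $G_i$. This guarantees one non-tree edge per ear directly, with no need to analyze removable edges at all.
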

\begin{proof}
We construct the desired ear decomposition by an algorithm that we will first describe informally.  For the cycle $R_1$, we can take any fundamental cycle with respect to $T$ (that is, an edge outside $T$ together with the unique path in $T$ between its endpoints).  At the $i^{th}$ step of the algorithm, we will have constructed a 2-connected graph $G_i=R_1\cup\cdots R_i$ such that $G_i\cap T$ is a spanning tree of $G_i$ (these conditions are loop invariants of the algorithm).  The algorithm then identifies an edge $e\not\in T$ each of whose endpoints can be joined to $G_i$ by (possibly trivial) paths in $T$; these two paths together with $e$ form the ear $R_{i+1}$.

Here is the precise algorithm, including observations that justify its correctness.

\begin{itemize}
  \item \textbf{Initialization:} Let $i=1$, let $R_1$ be any fundamental cycle of $T$, and let $G_1=R_1$.
  \item \textbf{Loop} while $G_i\subsetneq G$:
\begin{itemize}
	\item If there exists an edge $e\in (G\sm T)\sm G_i$ with both endpoints in $V(G_i)$, then let $R_{i+1}=\{e\}$.
	\item Otherwise:
		\begin{itemize}
			\item Let $x$ be a vertex in $V(G_i)$ with at least one neighbor outside $V(G_i)$.
			\item Let $T'$ be the subtree of $T$ consisting of all paths that start at $x$ and take their next step into $V(G_i)$.
			\item Let $T''$ be the subtree of $T$ consisting of all paths that start at $x$ and take their next step outside $V(G_i)$.
			\item Then $E(T)$ is the disjoint union of $E(T')$ and $E(T'')$, and $V(T')\cap V(T'')=\{x\}$.
			\item There must be some edge $e=yz\in E(G)$ with one endpoint $y$ in $V(T')\sm\{x\}$ and one endpoint $z$ in $V(T'')\sm\{x\}$, otherwise $x$ would be a cut-vertex of $G$.
			\item In fact, $e\not\in T$, since $T'$ contains a path $P_y$ from $y$ to $x$ and $T''$ contains a path $P_z$ from $x$ to $z$.
			\item Let $P'$ be the shortest subpath of $P_y$ from $y$ to a vertex in $V(G_i)$, and let $P''=P_z$.
			\item Set $R_{i+1}=P'\cup\{e\}\cup P''$.
		\end{itemize}
  \end{itemize}
	\item In either case, $R_{i+1}$ is a path containing exactly one edge of $T$ and that meets $G_i$ only in its endpoints.
	\item Therefore, the graph $G_{i+1}=G_i\cup R_{i+1}$ is 2-connected.  Moreover, it has $i+1$ ears, each of which contains exactly one edge outside $T$.  Since $T$ is acyclic, it follows that $E(G_{i+1})\cap T$ is a spanning tree.
	\item Increment $i$ and repeat.
\end{itemize}
\end{proof}

\begin{remark}
The algorithm outlined in this proof is essentially equivalent to an algorithm sketched by Fedor Petrov on MathOverflow \cite{Petrov-MO} in response to a question by one of the authors.  Schmidt \cite{Schmidt} proposed a very similar algorithm for determining 2-connectivity and 2-edge connectivity, with the restriction (possibly removable) that $T$ must be a depth-first search tree.
\end{remark}

A \defterm{tree minor} of $G$ is a nontrivial tree of the form $G/C\setminus D$, where $C$ and $D$ are subsets of $E(G)$.  Note that $C$ must be acyclic, and it can be deduced that $|D|=|E(G)|-|V(G)|+1$.  For every RDCT $\fancyB$ of~$G$, every leaf of~$\fancyB$ is a tree minor.  It is not true in general that every tree minor of $G$ actually occurs in some binary tree $\fancyB$, because the order of removing edges has to be arranged to avoid contracting a cut-edge or loop.  For example, if $G$ is the paw graph
\begin{center}
\begin{tikzpicture}[scale=0.5]
\foreach \a in {0,120,240} \draw[fill=black] (\a:1) circle (.15);
\draw[fill=black] (2.73,0) circle (.15);
\draw (2.73,0) -- (0:1) -- (120:1) -- (240:1) -- (0:1);
\node at (1.87,.4) {$e$};
\end{tikzpicture}
\end{center}
then the tree minor consisting of the cut-edge~$e$ alone cannot appear as a leaf of $\fancyB$.  On the other hand, when $G$ is 2-connected it is possible to achieve every tree minor.

\begin{prop} \label{every-tree-has-its-day}
Let $G$ be 2-connected and let $G/C\backslash D$ be any tree minor, where $C,D\subseteq E(G)$.  Then it is possible to contract the edges of $C$ and delete the edges of $D$ in an order such that one never contracts a cut-edge or deletes a loop.  Therefore, some RDCT of~$G$ contains $G/C\backslash D$ as a leaf.
\end{prop}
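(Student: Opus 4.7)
The plan is to invoke Theorem~\ref{all-ears} with a carefully chosen ear decomposition and then process edges in reverse ear order with a specific local schedule. Since $G/C\setminus D$ is a tree, $C$ must be acyclic (otherwise contracting $C$ would create loops not absorbed by $D$), so $C$ extends to a spanning tree $T\subseteq G$; I set $F=T\setminus C$ and $D=E(G)\setminus T$, and note that $F\neq\emptyset$ by nontriviality of the tree minor. I then pick any $f\in F$: removing $f$ from $T$ splits it into two subtrees, and by $2$-connectivity of $G$ there must be a $D$-edge $d$ with one endpoint in each, so $f$ lies on the fundamental cycle of $d$ in $T$. Applying Theorem~\ref{all-ears} yields an ear decomposition $R_1,\dots,R_m$ with exactly one $D$-edge per ear, and the construction there allows me to start with $R_1$ equal to the fundamental cycle of $d$, which therefore contains~$f$.

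Next I will process the ears in reverse order $R_m, R_{m-1},\dots, R_1$; within each ear $R_i$, I first contract all $C$-edges of $R_i$ (in any order) and then delete the unique $D$-edge of $R_i$. The central structural fact I need is: for $i\geq 2$, with $R_i$ an ear attached to $G'_{i-1}:=R_1\cup\cdots\cup R_{i-1}$ at endpoints $u,v$, the vertices $u,v$ lie in distinct merge classes at the moment we begin processing $R_i$. This will hold because $T\cap G'_{i-1}$ is a spanning tree of $G'_{i-1}$, so the unique $T$-path from $u$ to $v$ lies inside $G'_{i-1}$, using only $T$-edges in earlier ears; but those ears have not yet been processed, so no $C$-path available in the current graph bridges $u$ to $v$.

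Given the central fact, I verify each operation is safe. A $C$-edge of $R_i$ lies on the cycle formed by $R_i$ together with a $u$-to-$v$ path in $G'_{i-1}$, hence is not a cut-edge when contracted; and because $R_i$ is a path (for $i\geq 2$) whose interior vertices belong to no other ear, these contractions do not create parallel pairs whose subsequent contraction would produce a loop. When the $D$-edge of $R_i$ is deleted, its two endpoints will have been merged only with vertices on their own side of the $D$-edge within $R_i$, and the central fact rules out those sides colliding through $u$ or $v$; hence the $D$-edge is not a loop. For the cycle $R_1$, the chosen $F$-edge $f$ survives all $C$-contractions, keeping the contracted cycle of length at least~$2$ when the $D$-edge is deleted, again avoiding a loop. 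The main obstacle will be the careful bookkeeping of merge classes during intermediate contractions, since a priori a parallel created along the boundary $u,v$ could turn into a loop at a later contraction; but the central fact together with the insistence that $R_1$ contain an $F$-edge packages precisely the information needed, so the argument becomes a clean case analysis once the ear decomposition is set up correctly.
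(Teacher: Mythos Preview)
Your approach is essentially the paper's: apply Theorem~\ref{all-ears} to the spanning tree $T=C\cup F$ (note that $T$ is determined by $C,D$, not merely an arbitrary extension of $C$; your phrasing ``$C$ extends to a spanning tree $T$'' should be tightened), then peel off ears from the outside in, contracting the $C$-edges and deleting the unique $D$-edge of each ear.  Your extra step of forcing $R_1$ to contain an edge of $F$ is not superfluous---it is exactly what makes the final deletion safe.  The paper's induction, as written, tacitly assumes $F\cap R_m=\emptyset$ (so that processing $R_m$ leaves precisely $G_{m-1}$) and that the residual problem on $G_{m-1}$ still has a nontrivial tree minor; when all of $F$ lies in the outermost ear these assumptions fail and the last $D$-edge becomes a loop.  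Your choice of $R_1$ repairs this.

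One local justification is wrong, however.  You claim that for $i\ge 2$ the interior vertices of $R_i$ ``belong to no other ear,'' and use this to argue that contracting the $C$-edges of $R_i$ cannot create loops.  In fact interior vertices of $R_i$ can be endpoints of later ears $R_j$ (for $j>i$), and hence can already be merged with other vertices before you start on $R_i$.  The correct reason no $C$-edge is ever a loop is simply that $C$ is acyclic: if $pq\in C$ became a loop, there would be a second $C$-path from $p$ to $q$ among the already-contracted $C$-edges, giving a cycle in $C$.  The same acyclicity argument, together with the fact that the unique $T$-path between the endpoints of the $D$-edge of $R_i$ must pass through $G'_{i-1}$ (whose edges are untouched), cleanly shows that the $D$-edge is not a loop---your ``central fact'' about $u$ and $v$ is a special case of this, but as stated it does not rule out merges via later ears attached at interior vertices of $R_i$.
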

\begin{proof}

Let $F=E(G)\sm C\sm D$.  Then $F$ is acyclic; in fact, $T=C\cup F$ must be a spanning tree of $G$
since the tree $G/C\backslash D$ can be produced from $T$ by contracting~$C$, and contraction preserves connectedness and acyclicity.
By Theorem~\ref{all-ears}, there exists an ear decomposition $G=R_1\cup\cdots\cup R_m$ such that $|R_i\sm (C\cup F)|=|R_i\cap D|=1$ for all $i$.

We show by induction on $m$ that it is possible to order the contractions and deletions as desired.  For the base case $m=1$, then $G=R_1$ is an $n$-cycle and $|D|= |E(G)| - |V(G)| + 1 = 1$.  First contract the edges in $C$, of which there can be at most $n-2$, to produce a smaller cycle $G'$, then delete the edge in $D$ which is not a cut edge.

If $m\geq 2$, first contract the edges of $C\cap R_m$.  The result is a (possibly non-simple) graph consisting of $G_{m-1}=R_1\cup\cdots\cup R_{m-1}$ with one additional edge (in $D$)  joining the endpoints of $R_m$.  That edge is not a cut-edge, so we can delete it, leaving the 2-connected graph $G_{m-1}$, and we are done by induction.
\end{proof}

This last observation yields an immediate answer to the question of when $\els_0(G)\neq 0$ (equivalently, by Theorem~\ref{thm:els}, when $\els_0(G)<0$).

\begin{theorem} \label{negativity:els0}
Let $G$ be a connected simple graph.  Then:
\begin{itemize}
\item If $G$ has no cut-vertex, then $\els_0(G)<0$.
\item Otherwise, $\els_0(G)=0$.
\end{itemize}
\end{theorem}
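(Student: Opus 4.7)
For the case $k=0$, Theorem~\ref{thm:elser-from-summands} collapses to
\[
\els_0(G) \;=\; (-1)^{|E(G)|+|V(G)|}\,\tilde\chi(\Delta^G_\0),
\]
since $\Sur(0,|U|)$ vanishes unless $U=\0$. Equivalently, Proposition~\ref{thm:els:2}\ref{els:zero-condition} writes $\els_0(G)=-\#\{i:T_i\cong K_2\}$ for any RDCT of $G$. The plan is to treat the two cases separately, handling the 2-connected case by building an RDCT with a $K_2$ leaf and handling the non-2-connected case by a direct factoring of nuclei through a cut-vertex.

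For 2-connected $G$, I would exhibit an RDCT with at least one leaf isomorphic to $K_2$; Proposition~\ref{thm:els:2}\ref{els:zero-condition} would then give $\els_0(G)\leq-1<0$. The key observation is that $K_2$ is always a tree minor of $G$ when $|V(G)|\geq 2$: take any spanning tree $T$, pick any edge $e\in E(T)$, and set $C=E(T)\setminus\{e\}$ and $D=E(G)\setminus E(T)$, so that $G/C\setminus D\cong K_2$. Proposition~\ref{every-tree-has-its-day} immediately upgrades this tree-minor statement to the existence of an RDCT with $K_2$ among its leaves, finishing the 2-connected case.

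For $G$ not 2-connected, the only simple connected graph on two vertices is $K_2$ itself (which is 2-connected), so $|V(G)|\geq 3$ and $G$ has a cut-vertex $v$. Let $A_1,\dots,A_s$ be the vertex sets of the components of $G-v$, and let $G_i$ be the subgraph of $G$ induced on $\{v\}\cup A_i$. By Proposition~\ref{prop:cutsets}, every nucleus $N\in\N(G)$ contains $v$; setting $N_i=N\cap G_i$, I would verify that each $N_i$ is a nucleus of $G_i$ containing $v$ (connectedness of $N_i$ follows because any path in $N$ leaving $G_i$ must pass through $v$, and the vertex-cover property is inherited), and conversely that any tuple $(N_1,\dots,N_s)$ of such nuclei glues to a unique nucleus of $G$. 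Since $|E(N)|=\sum_i|E(N_i)|$, this bijection gives
\[
\sum_{N\in\N(G)}(-1)^{|E(N)|} \;=\; \prod_{i=1}^{s}\,\sum_{\substack{N_i\in\N(G_i):\\ v\in V(N_i)}}(-1)^{|E(N_i)|}.
\]
A direct rewriting of each inner sum as $(-1)^{|E(G_i)|+1}\tilde\chi(\Delta^{G_i}_{\{v\}})$ together with Proposition~\ref{prop:euler-all}\ref{prop:euler-all:singleton} (applicable since $|V(G_i)|\geq 2$) shows that each factor vanishes, whence $\els_0(G)=0$.

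The main obstacle is really the factoring step in the non-2-connected case: one must carefully justify that nuclei of $G$ correspond bijectively to tuples of $v$-pointed nuclei of the $G_i$'s, with multiplicative behavior on edge counts. Once this combinatorial fact is in place, the vanishing reduces to the already-established single-vertex Euler-characteristic identity, Proposition~\ref{prop:euler-all}\ref{prop:euler-all:singleton}. The 2-connected case, by contrast, is essentially packaged by Proposition~\ref{every-tree-has-its-day} and the easy observation that $K_2$ is a tree minor.
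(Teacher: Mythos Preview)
Your proof is correct. The 2-connected case is handled exactly as in the paper: exhibit $K_2$ as a tree minor and invoke Proposition~\ref{every-tree-has-its-day} together with Proposition~\ref{thm:els:2}\ref{els:zero-condition}.

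For the non-2-connected case the two arguments diverge. The paper simply cites Elser's original result \cite[Theorem~1]{Elser}, whereas you give a self-contained proof within the paper's framework by factoring nuclei through a cut-vertex and reducing each factor to Proposition~\ref{prop:euler-all}\ref{prop:euler-all:singleton}. Your factoring is essentially the $U=\{v\}$ instance of what the paper later packages as Proposition~\ref{prop:join}. It is worth noting that your argument can be shortened further: once you observe (via Proposition~\ref{prop:cutsets}) that every nucleus contains the cut-vertex~$v$, you have $\Delta^G_\0=\Delta^G_{\{v\}}$ outright, and Proposition~\ref{prop:euler-all}\ref{prop:euler-all:singleton} applied to $G$ itself gives $\tilde\chi(\Delta^G_\0)=0$ immediately---no decomposition into the $G_i$ is needed. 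Either way, your route has the virtue of being internal to the paper's machinery rather than appealing to the external reference.
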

\begin{proof}
The ``otherwise'' case is Theorem~1 of \cite{Elser}.  If $G$ is 2-connected, then let $H$ be any $K_2$ minor.  By Prop~\ref{every-tree-has-its-day}, $H$ appears as a leaf in some RDCT for~$G$, so $\els_0(G)<0$.
\end{proof}

At this point, we have proven Theorem~\ref{thm:strict-positivity} in the cases that $k=0$ and $k=1$.  Accordingly, we assume throughout the rest of the section that $k\geq2$ and $U\neq\0$.

The \defterm{join} of two simplicial complexes $\Gamma_1,\Gamma_2$ on disjoint vertex sets is the simplicial complex $\Gamma_1*\Gamma_2=\{\sigma_1\cup\sigma_2:\ \sigma_1\in\Gamma_1,\;\sigma_2\in\Gamma_2\}$.  A routine calculation shows that $\tilde{\chi}(\Gamma_1 * \Gamma_2)=-\tilde{\chi}(\Gamma_1)\tilde{\chi}(\Gamma_2)$.
In particular, $\tilde{\chi}(\Gamma_1 * \Gamma_2)=0$ if and only if $\tilde{\chi}(\Gamma_1)$ or $\tilde{\chi}(\Gamma_2)$ is zero.

\begin{prop} \label{prop:join}
Let $G$ be a connected graph with a cut-vertex $v$.  Let $G_1,G_2$ be connected subgraphs of $G$, each a union of cut-components of $G$ with respect to $v$, such that
$G_1\cup G_2=G$ and $V(G_1)\cap V(G_2)=\{v\}$.
Let $U \subseteq V(G)$ such that $v\in U$.  Then
\[\Delta_U^G = \Delta_{U \cap V(G_1)}^{G_1} * \Delta_{U \cap V(G_2)}^{G_2}\]
and consequently
\[\tilde{\chi}(\Delta_U^G) = - \tilde{\chi}\left(\Delta_{U \cap V(G_1)}^{G_1}\right) \tilde{\chi}\left(\Delta_{U \cap V(G_2)}^{G_2}\right).\]
\end{prop}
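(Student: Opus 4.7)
The plan is to establish the simplicial-complex equality $\Delta^G_U = \Delta^{G_1}_{U\cap V(G_1)} * \Delta^{G_2}_{U\cap V(G_2)}$ directly at the level of faces; the Euler characteristic identity then follows immediately from the join formula $\tilde\chi(\Gamma_1 * \Gamma_2) = -\tilde\chi(\Gamma_1)\tilde\chi(\Gamma_2)$ stated just above. Since $V(G_1)\cap V(G_2)=\{v\}$ and $G_1,G_2$ are each unions of cut-components of $G$ with respect to $v$, the edge sets $E(G_1)$ and $E(G_2)$ partition $E(G)$, so every $A\subseteq E(G)$ decomposes uniquely as $A=A_1\sqcup A_2$ with $A_i=A\cap E(G_i)$; in particular, the vertex sets of the two putative join factors (namely $E(G_1)$ and $E(G_2)$) are disjoint, so the join is well-defined. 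It remains to prove that $A\in\Delta^G_U$ if and only if $A_i\in\Delta^{G_i}_{U\cap V(G_i)}$ for both $i$.

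For the forward direction, I would take $A\in\Delta^G_U$, let $N$ be the associated $U$-nucleus (so $E(N)=E(G)\sm A$), and let $N_i$ be the subgraph of $G_i$ with edge set $E(N)\cap E(G_i)$ and vertex set $V(N)\cap V(G_i)$. The inclusion $U\cap V(G_i)\subseteq V(N_i)$ and the fact that $V(N_i)$ is a vertex cover of $G_i$ are immediate from the corresponding properties of $V(N)$ (using that both endpoints of any edge in $E(G_i)$ lie in $V(G_i)$). The subtle point is connectedness of $N_i$: fix $u\in V(N_i)$ and take a path from $u$ to $v$ in $N$, noting that $v\in V(N)$ because $v\in U$. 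Any edge of $G$ with one endpoint in $V(G_i)\sm\{v\}$ must lie in $G_i$, because that endpoint belongs to no other $V(G_j)$, and hence has its other endpoint in $V(G_i)$ as well. Therefore if the path ever leaves $V(G_i)$, the vertex immediately preceding the exit must be $v$; the prefix up to that point is a walk in $N_i$ from $u$ to $v$. Thus $N_i$ is connected, so it is a $(U\cap V(G_i))$-nucleus of $G_i$, proving $A_i\in\Delta^{G_i}_{U\cap V(G_i)}$.

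For the reverse direction, given $A_i\in\Delta^{G_i}_{U\cap V(G_i)}$ with associated nuclei $N_i$, I would set $N=N_1\cup N_2$ and verify the nucleus conditions: $N$ is connected because $N_1,N_2$ are each connected and both contain $v$ (since $v\in U\cap V(G_i)\subseteq V(N_i)$); $V(N)$ is a vertex cover of $G$ because the edge partition reduces the check to each $G_i$; and $V(N)\supseteq U$ follows from $U=(U\cap V(G_1))\cup(U\cap V(G_2))$. This produces the inverse to the map above.

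The main obstacle is the connectedness argument for $N_i$ in the forward direction. This is precisely where the cut-vertex structure of $G$ at $v$ (not merely the fact that $v$ is shared by $G_1$ and $G_2$) is essential, and where the hypothesis $v\in U$ is genuinely used, to ensure $v\in V(N)$ and thus serve as an anchor for the connecting paths. Everything else is routine bookkeeping with vertex covers and the disjoint edge partition, after which the Euler characteristic statement is a one-line corollary of the join formula.
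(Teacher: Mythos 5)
Your proof is correct and follows essentially the same route as the paper's: decompose a nucleus $N$ of $G$ into $N\cap G_1$ and $N\cap G_2$, verify each piece is a nucleus of the respective subgraph containing $v$, observe this gives a bijection with pairs of such nuclei, pass to complements to get the join, and invoke the join formula for Euler characteristics. The one difference is one of detail rather than method: the paper simply asserts that $N\cap G_i$ is a nucleus of $G_i$, while you supply the argument for the connectedness of $N\cap G_i$ (any path in $N$ from a vertex of $V(G_i)$ to $v$ stays inside $G_i$ until it reaches $v$, because every edge touching $V(G_i)\sm\{v\}$ lies in $E(G_i)$, using the cut-component hypothesis), which is precisely the point where the cut-vertex structure and $v\in U$ (hence $v\in V(N)$) are used. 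That is a genuine and welcome filling-in of the paper's terse step, not a different proof.
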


\begin{proof}
Every nucleus of $G$ must contain every cut-vertex by Proposition~\ref{prop:cutsets}, so $\Delta_U^G=\Delta_{U\cup v}^G$ for all $U\subseteq V(G)$, so the assumption $v \in U$ is harmless.  Let $N\in\N(G)$; then $N\cap G_1$ and $N\cap G_2$ are nuclei of $G_1$ and $G_2$, both containing~$v$.  Conversely, if $N_1$ and $N_2$ are nuclei of $G_1$ and  $G_2$ that each contain~$v$, then $N_1\cup N_2$ is a nucleus of $G$ (which of course contains $v$).  Passing to $U$-nucleus complexes by complementing edge sets gives the desired result on joins.  The equation for reduced Euler characteristics follows from the remarks preceding the proposition.
\end{proof}

\begin{prop}\label{not-singleton}
If $G$ is 2-connected and $U \subseteq V(G)$ is nonempty, then $\tilde{\chi}\left(\Delta^G_U\right)=0$ if and only if $|U|=1$.

Consequently, $\els_k(G) > 0$ for all $k \geq 2$.
\end{prop}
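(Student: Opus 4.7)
The ``if'' direction is already in hand: Proposition~\ref{prop:euler-all}\ref{prop:euler-all:singleton} says that $\tilde\chi(\Delta^G_U)=0$ whenever $|U|=1$. So the work is to show that if $|U|\geq 2$, then $\tilde\chi(\Delta^G_U)\neq 0$, and then deduce the strict positivity of $\els_k(G)$.

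The plan for the nonvanishing is to package everything through a restricted deletion/contraction tree and appeal to the sign uniformity already established. By equation~\eqref{distilled}, for any RDCT $\fancyB$ with leaves $T_1,\dots,T_s$,
\[
(-1)^{|E(G)|+|V(G)|}\tilde\chi(\Delta^G_U) \;=\; -\sum_{i=1}^{s}\tilde\chi(\Delta^{T_i}_{U[T_i]}),
\]
and Proposition~\ref{prop:tree-euler} tells us that each term $-\tilde\chi(\Delta^{T_i}_{U[T_i]})$ is either $0$ or $+1$, with value $+1$ precisely when $T_i$ is either a $K_2$ whose two vertices both lie in $U[T_i]$, or a larger tree all of whose leaves lie in $U[T_i]$. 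So it suffices to exhibit even one RDCT of $G$ with a leaf $T_i$ of this favorable form.

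For $|U|\geq 2$, I would pick two distinct vertices $u_1,u_2\in U$ and aim for the simplest possible favorable leaf: a $K_2$ tree minor whose two vertices are the contraction-images of $u_1$ and $u_2$. To build it, choose any spanning tree $T$ of $G$, let $e=xy$ be an edge on the unique $u_1$--$u_2$ path in $T$, and let $T_x,T_y$ be the two subtrees of $T\sm e$ containing $u_1,u_2$ respectively. Contracting every edge of $T_x\cup T_y$ merges $u_1$ and $u_2$ into distinct vertices of the resulting 2-vertex multigraph; deleting all but one of the remaining edges yields a $K_2$ tree minor with $U[K_2]=\{u_1[K_2],u_2[K_2]\}$ of size~$2$. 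Since $G$ is 2-connected, Proposition~\ref{every-tree-has-its-day} guarantees that this tree minor appears as a leaf of some RDCT. The summand for that leaf contributes $+1$ to the displayed sum, while every other summand is nonnegative; hence $\tilde\chi(\Delta^G_U)\neq 0$.

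For the ``Consequently'' clause, I would feed this into Theorem~\ref{thm:elser-from-summands}:
\[
\els_k(G) = \sum_{U\subseteq V(G)} \Sur(k,|U|)\cdot (-1)^{|E(G)|+|V(G)|}\tilde\chi(\Delta^G_U).
\]
By Proposition~\ref{prop:euler-all} every summand is nonnegative for $k\geq 2$, and by the part just proved the summands indexed by any 2-element subset $U\subseteq V(G)$ are strictly positive; since $\Sur(k,2)>0$ for all $k\geq 2$, the total is strictly positive.

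The main obstacle is the construction of a $K_2$ tree minor realizing a chosen pair $\{u_1,u_2\}\subseteq U$ as its vertex set in a way that actually sits inside an RDCT; this is what forces the 2-connectivity hypothesis, via Proposition~\ref{every-tree-has-its-day}. Everything else is bookkeeping with the sign-uniformity formula and Proposition~\ref{prop:tree-euler}.
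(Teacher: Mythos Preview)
Your proposal is correct and follows essentially the same strategy as the paper: produce a tree minor $T_i$ whose leaves all lie in $U[T_i]$, invoke Proposition~\ref{every-tree-has-its-day} (using 2-connectivity) to realize it as a leaf of some RDCT, and then use equation~\eqref{distilled} together with Proposition~\ref{prop:tree-euler} to conclude nonvanishing. The only difference is the choice of tree minor: the paper takes the Steiner subtree $F$ of $U$ inside a spanning tree $T$ (whose leaves automatically lie in $U$ by minimality), whereas you contract more aggressively to a $K_2$ minor separating two chosen vertices $u_1,u_2\in U$. Both work; your $K_2$ construction is arguably simpler, while the paper's $F$ captures all of $U$ at once. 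For the ``Consequently'' clause, the paper cites Proposition~\ref{thm:els:2}\ref{els:nonzero-condition} directly, and your argument via Theorem~\ref{thm:elser-from-summands} is just an unpacking of that same statement.
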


\begin{proof}
The case $|U|=1$ is Proposition~\ref{prop:euler-all}\ref{prop:euler-all:singleton}.  Thus, suppose $|U| \geq 2$.
Let $T$ be a spanning tree of $G$.  Let $F$ be the smallest subtree of $T$ such that $V(F)\supseteq U$; in particular $U$ contains all leaves of $F$. Then $F$ is a tree minor of $G$, so by Proposition~\ref{every-tree-has-its-day}, the summand $\tilde\chi(\Delta^F_{U[F]})$ (which equals $-1$ by Proposition~\ref{prop:tree-euler}) appears in some summation expression for $\tilde\chi(\Delta^G_U)$ arising from an RDCT.  Equation~\eqref{distilled} then implies that $\tilde\chi(\Delta^G_U)\neq 0$, and then Proposition~\ref{thm:els:2}\ref{els:nonzero-condition} implies that $\els_k(G)>0$ for all $k \geq 2$.
\end{proof}

Proposition~\ref{not-singleton} completes the proof of Theorem~\ref{thm:strict-positivity}\ref{strict-pos:2conn}.

\begin{prop}\label{block-char}
Let $U \subseteq V(G)$ be nonempty, let $K$ be the collection of cut-vertices of $G$, let $U'=U\cup K$, and let $B_1,\dots B_c$ be the $2$-connected components of $G$.

Then $\tilde{\chi}(\Delta^G_U)=0$ if and only if $|V(B_i) \cap U'| \geq 2$ for every $i$.
\end{prop}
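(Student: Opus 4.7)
The plan is to induct on the number of blocks $c$ of $G$, exploiting the block decomposition via Proposition~\ref{prop:join}. First observe that by Proposition~\ref{prop:cutsets} every nucleus of $G$ contains every cut-vertex, so $\Delta^G_U = \Delta^G_{U'}$; hence without loss of generality we may assume $K \subseteq U$. This is convenient because Proposition~\ref{prop:join} requires that the cut-vertex at which we split lie in $U$.

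For the base case $c=1$, the graph $G$ is 2-connected, $K=\emptyset$, and the unique block $B_1=G$ satisfies $|V(B_1)\cap U'|=|U|$. Proposition~\ref{not-singleton} then pins down when vanishing occurs: $\tilde\chi(\Delta^G_U)=0$ precisely when $|U|=1$, which is precisely the case in which $|V(B_1)\cap U'| < 2$; this is the base-case version of the characterization being proved. For the inductive step, pick a cut-vertex $v$ of $G$ and decompose $G=G_1\cup G_2$ with $G_1\cap G_2=\{v\}$, each $G_i$ a union of blocks of $G$. Since $v\in U$, Proposition~\ref{prop:join} gives
\[
\tilde\chi(\Delta^G_U)=-\,\tilde\chi\!\left(\Delta^{G_1}_{U\cap V(G_1)}\right)\tilde\chi\!\left(\Delta^{G_2}_{U\cap V(G_2)}\right),
\]
so the left-hand side vanishes iff at least one factor vanishes. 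Apply the inductive hypothesis to each $G_i$ (both have fewer than $c$ blocks): the $G_i$-factor vanishes iff some block of $G_i$ violates the block-intersection condition as measured inside $G_i$. Since the blocks of $G_i$ are precisely the blocks of $G$ contained in $G_i$, and the blocks of $G_1$ and $G_2$ together partition the blocks of $G$, combining the two halves yields the characterization for $G$.

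The main obstacle is the bookkeeping that reconciles the cut-vertex set $K_i$ of the smaller graph $G_i$ with the ambient set $K$. Concretely, for each block $B\subseteq G_i$ one must verify
\[
V(B)\cap \bigl((U\cap V(G_i))\cup K_i\bigr) \;=\; V(B)\cap U',
\]
so that the inductive criterion on $G_i$ faithfully encodes the ambient criterion on $G$. For a vertex $w\in V(B)\setminus\{v\}$ this follows from the observation that $w$ is a cut-vertex of $G$ iff $w$ is a cut-vertex of $G_i$ (the other component $G_{3-i}$ attaches to $G_i$ only through $v$, so removing $w$ disconnects $G$ iff it already disconnects $G_i$). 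The delicate case is $w=v$, where $v$ may or may not lie in $K_i$ depending on whether $G_i$ is itself a single block; but $v$ has already been forced into $U$ by our opening reduction, so $v\in V(B)\cap U'$ regardless, and the equality goes through. Once this bookkeeping is in place, the induction closes; note that the sign in Proposition~\ref{prop:join} plays no role in detecting vanishing.
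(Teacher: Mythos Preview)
Your argument is correct and follows the same route as the paper: both reduce to $U'=U\cup K$ and then iterate Proposition~\ref{prop:join} to factor $\tilde\chi(\Delta^G_{U'})$ over the blocks, invoking Proposition~\ref{not-singleton} on each $2$-connected piece. The paper simply asserts the product formula $\tilde\chi(\Delta^G_{U'})=(-1)^{c-1}\prod_i\tilde\chi(\Delta^{B_i}_{U'\cap V(B_i)})$ in one line, whereas you unwind the iteration as an explicit induction on~$c$ and carry out the cut-vertex bookkeeping (that $K_i\subseteq K$ once $v$ is accounted for) which the paper leaves implicit; note also that both your proof and the paper's establish the biconditional in the form ``$\tilde\chi=0$ iff some block meets $U'$ in exactly one vertex,'' i.e.\ the contrapositive of the statement as literally written.
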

\begin{proof}
Repeatedly applying Prop.~\ref{prop:join} gives
\[
\tilde{\chi}(\Delta_{U'}^G)=(-1)^{c-1}\prod_{i=1}^c \tilde{\chi} \left(\Delta_{{U'} \cap V(B_i)}^{B_i}\right)
\]
which, by Proposition~\ref{not-singleton}, is zero if and only if $|{U'} \cap V(B_i)|=1$ for some $i$.
\end{proof}

Combining Proposition~\ref{thm:els:2}\ref{els:nonzero-condition} with Proposition~\ref{block-char} implies the characterization of the positivity of $\els_k(G)$ in Theorem~\ref{thm:strict-positivity}\ref{strict-pos:not2conn}, completing the proof.

We conclude this section by mentioning an interesting problem, due to an anonymous referee.  Let $f(x)$ be a function, and define the \defterm{generalized Elser invariant} of a graph $G$ by
\[\els_f(G) = (-1)^{|V(G)|+1} \sum_{N\in\N(G)} (-1)^{|E(N)|} f(|V(N)|)\]
so that setting $f(x)=x^k$ gives $\els_k(G)$.  For which functions $f(x)$ does our argument establish the sign of $\els_f(G)$?  Our proof depends only on the signs of the coefficients $\Sur(k,|U|)$, so a sufficient condition is that $\els_f$ satisfies some analogue of Theorem~\ref{thm:elser-from-summands} with appropriate signs.  I.e., suppose that the generalized Elser invariant can be rewritten as
\[
\els_f(G) = (-1)^{|E(G)|+|V(G)|} \sum_{U\subseteq V(G)} a(U) \;\tilde\chi(\Delta^G_U)
\]
such that, for some nonnegative integer $k$, the number $a(U)$ has the same sign as $\Sur(k,|U|)$ for all $U$.  Then our argument implies that $\els_f(G)$ has the same sign as $\els_k(G)$.  It would be interesting to look for other graph invariants with this property.

\section{Monotonicity} \label{sec:monotonicity}

In this section, we use the technical results of Sections~\ref{section:deletion-contraction} and~\ref{sec:proof-of-conj}, including the proof of Elser's conjecture itself, to prove a deletion-contraction type \emph{inequality} for Elser numbers that is stronger than the original conjecture.

\begin{theorem} \label{thm:monotonic}
Let $e\in E(G)$ such that $e$ is not a loop or cut-edge. Then
\[\els_k(G) \geq \els_k(G/e)+\els_k(G\sm e),\]
with equality for $k=0$.
\end{theorem}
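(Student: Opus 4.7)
The plan is to combine Theorem~\ref{thm:elser-from-summands}, which expresses each Elser number as a signed weighted sum of reduced Euler characteristics of nucleus complexes, with the deletion-contraction recurrence of Theorem~\ref{thm:delete-contract}, and then invoke the sign information recorded in Proposition~\ref{prop:euler-all}. A first observation: $|E(G)|+|V(G)|$ and $|E(G/e)|+|V(G/e)|$ have the same parity, while $|E(G\sm e)|+|V(G\sm e)|$ has the opposite parity, so a common sign $\epsilon=(-1)^{|E(G)|+|V(G)|}$ factors out of all three Elser numbers simultaneously.

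First I would assemble
\[ \els_k(G) - \els_k(G/e) - \els_k(G\sm e) = \epsilon\!\left[\sum_{U \subseteq V(G)}\!\! \Sur(k,|U|)\bigl(\tilde\chi(\Delta^G_U) + \tilde\chi(\Delta^{G\sm e}_U)\bigr) \;-\!\! \sum_{U' \subseteq V(G/e)}\!\! \Sur(k,|U'|)\tilde\chi(\Delta^{G/e}_{U'})\right]. \]
The recurrence $\tilde\chi(\Delta^G_U) = \tilde\chi(\Delta^{G/e}_{U/e}) - \tilde\chi(\Delta^{G\sm e}_U)$ collapses the inner parenthesis to $\tilde\chi(\Delta^{G/e}_{U/e})$, so after reindexing the first sum by $U'=U/e$ I would compare coefficients of each $\tilde\chi(\Delta^{G/e}_{U'})$ between the two sums. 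Writing $e=\{x,y\}$, the preimages of $U'$ under $U\mapsto U/e$ are: the single set $U'$ itself when $xy\notin U'$; and three sets of sizes $|U'|,|U'|,|U'|+1$ (obtained by replacing $xy$ by $\{x\}$, $\{y\}$, or $\{x,y\}$) when $xy\in U'$. The terms for $U'$ with $xy\notin U'$ cancel exactly, and I would arrive at
\[ \els_k(G) - \els_k(G/e) - \els_k(G\sm e) = \epsilon\sum_{\substack{U' \subseteq V(G/e) \\ xy \in U'}} \bigl[\Sur(k,|U'|) + \Sur(k,|U'|+1)\bigr]\,\tilde\chi(\Delta^{G/e}_{U'}). \]

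Since $\epsilon = (-1)^{|E(G/e)|+|V(G/e)|}$ and $|U'|\geq 1$ throughout the remaining sum, Proposition~\ref{prop:euler-all}\ref{prop:euler-all:singleton} together with the nonnegativity statement for $|U|\geq 1$ gives $\epsilon\,\tilde\chi(\Delta^{G/e}_{U'}) \geq 0$; combined with $\Sur(k,\cdot)\geq 0$, this proves the inequality. For $k=0$, both $\Sur(0,|U'|)$ and $\Sur(0,|U'|+1)$ vanish (as $|U'|\geq 1$), so the right-hand side is identically zero and equality holds. I expect the main obstacle to be the bookkeeping of the contraction map $U\mapsto U/e$ on subsets, in particular making sure the three-to-one preimage count at sets containing $xy$ is combined correctly with $\Sur$-weights of two different arguments; once that is done, Proposition~\ref{prop:euler-all} does the real work.
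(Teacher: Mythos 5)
Your proof is correct and follows essentially the same route as the paper: apply Theorem~\ref{thm:elser-from-summands} to all three Elser numbers, collapse $\tilde\chi(\Delta^G_U)+\tilde\chi(\Delta^{G\sm e}_U)$ to $\tilde\chi(\Delta^{G/e}_{U/e})$ via Theorem~\ref{thm:delete-contract}, and invoke Proposition~\ref{prop:euler-all} on the leftover terms. The only difference is presentational: the paper splits the sum by $x\in U$ versus $x\notin U$ and recognizes the latter as $\els_k(G/e)$ via the bijection $V(G)\sm\{x\}\to V(G/e)$, whereas you reindex fully by $U'=U/e$ and count preimages; both yield the identical remainder $\epsilon\sum_{xy\in U'}\bigl[\Sur(k,|U'|)+\Sur(k,|U'|+1)\bigr]\tilde\chi(\Delta^{G/e}_{U'})$.
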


\begin{proof}
Let $x$ and $y$ be the endpoints of $e$.
Abbreviating $n=|V(G)|=|V(G\sm e)| = |V(G/e)| +1$ and $m=|E(G)|=|E(G\sm e)|+1 = |E(G/e)| +1$, we have
\begin{align*}
\els_k(G)-\els_k(G\sm e)
&= (-1)^{m+n} \sum_{U\subseteq V(G)} \Sur(k,|U|) \;\tilde\chi(\Delta^G_U)-(-1)^{m+n-1} \sum_{U\subseteq V(G\sm e)} \Sur(k,|U|) \;\tilde\chi(\Delta^{G\sm e}_U)\\
\intertext{(by Theorem~\ref{thm:elser-from-summands})}
&= (-1)^{m+n} \sum_{U\subseteq V(G)} \Sur(k,|U|) \left( \tilde\chi(\Delta^G_U)+\tilde\chi(\Delta^{G\sm e}_U)\right)\\
&= (-1)^{m+n} \sum_{U\subseteq V(G)} \Sur(k,|U|) \;\tilde\chi(\Delta^{G/e}_{U/e})\\
\intertext{(by Theorem~\ref{thm:delete-contract})}
&= (-1)^{|E(G/e)|+|V(G/e)|} \left( \sum_{\substack{U\subseteq V(G)\\ x\not\in U}} \Sur(k,|U|) \;\tilde\chi(\Delta^{G/e}_{U/e}) + \sum_{\substack{U\subseteq V(G)\\ x\in U}} \Sur(k,|U|) \;\tilde\chi(\Delta^{G/e}_{U/e})\right)\\
&= \els_k(G/e)+(-1)^{|E(G/e)|+|V(G/e)|} \sum_{\{x\}\subseteq U\subseteq V(G)} \Sur(k,|U|) \;\tilde\chi(\Delta^{G/e}_{U/e})\\
\intertext{(rewriting the first sum via the natural bijection $V(G)\sm\{x\}\to V(G/e)$ sending $y$ to $xy$)}
&\geq \els_k(G/e)
\end{align*}
(by Theorem \ref{thm:els}).  Note that when $k=0$, the last sum vanishes and the last inequality is an equality.
\end{proof}

As in Section \ref{sec:proof-of-conj}, we can iterate this recurrence until we obtain a tree. So for any graph $G$, we have that $\els_k(G)$ is bounded below by $\sum_{i}\els_k(T_i)$ for a collection of trees $T_i$ when $k>1$. We illustrate this in the following simple example.

\begin{example}
Let $n\geq 3$ and consider $C_{n}$, the cycle graph on $n$ vertices. Let $e\in E(C_{n})$ be any edge of $C_{n}$. Then $C_{n}\backslash e$ is $P_{n}$ and $C_{n}/e$ is $C_{n-1}$.
By Theorem \ref{thm:monotonic}, we have
\begin{align*}
\els_k(C_{n}) & \geq \els_k(C_{n-1})+\els_k(P_{n}).
\end{align*}
Iterating this gives
\begin{align*}
\els_k(C_{n}) & \geq \sum_{i=1}^n \els_k(P_{i})
\geq \sum_{i=1}^n (i+1)^k - 2i^k + (i-1)^k
\end{align*}
by Example \ref{ex:closed-formulas}.
\end{example}

\section{Nucleus complexes: future directions} \label{sec:future}

In this last section, we explore combinatorial and topological aspects of nucleus complexes, in many cases without giving proofs.
We had initially intended to prove Elser's conjecture by computing their simplicial homology groups and thus their Euler characteristics.  While this approach did not prove feasible, nucleus complexes nonetheless appear to be interesting objects in their own right, worthy of future study.

We begin with some easy observations.  Let $G$ be a connected graph and $U,U'\subseteq V(G)$.  It follows easily from the definition of nucleus complexes that if $U \subseteq U'$, then $\Delta^G_{U'}\subseteq\Delta^G_U$.  Moreover, in all cases, ${\Delta^G_U\cap\Delta^G_{U'} = \Delta^G_{U\cup U'}}$.  On the other hand, ${\Delta^G_U\cup\Delta^G_{U'} \subseteq \Delta^G_{U\cap U'}}$, but equality need not hold.

A \defterm{matroid on ground set $E$} (more properly, a matroid independence complex) is a simplicial complex $M$ on vertices $E$ with the property that if $\sigma,\tau\in M$ and $|\sigma|>|\tau|$, then there is a vertex $v\in\sigma\sm\tau$ such that $\tau\cup\{v\}\in M$.  For a general reference on matroids, see, for example, \cite{Oxley}; for matroid complexes, see \cite{CCA}.  Every connected graph $G$ has an associated \defterm{graphic matroid} $M(G)=\{A\subseteq E(G):\ A\text{ is acyclic}\}$ and \defterm{cographic matroid} $M^*(G)=\{A\subseteq E(G):\ G\sm A\text{ is connected}\}$, of dimensions $|V(G)|-2$ and $|E(G)|-|V(G)|$ respectively.  These matroids are dual; that is, the facets (maximal faces) of $M^*(G)$ are precisely the complements of facets of $M(G)$.

In fact, $\Delta^G_{V(G)}$ is precisely the cographic matroid $M^*(G)$.  In particular, it is shellable, homotopy-equivalent to a wedge of spheres of dimension $|E(G)|-|V(G)|$, and has homology concentrated in that dimension.  For arbitrary $U\subseteq V(G)$, the nucleus complex $\Delta^G_U$ is not in general a matroid complex.  Nevertheless, experimental data gathered using Sage~\cite{Sage} supports the following conjecture.

\begin{conj} \label{homology-concentration}
Let $G$ be a connected graph and $U\subseteq V(G)$.  Then the reduced homology group $\HH_k(\Delta^G_U;\R)$ is nonzero only if (i) $U=\0$ and $k=|E(G)|-|V(G)|+1$, or (ii) $|U|\geq2$ and $k=|E(G)|-|V(G)|$.
\end{conj}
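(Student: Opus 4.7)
The plan is to prove Conjecture~\ref{homology-concentration} by strong induction on $|E(G)|$, upgrading the Euler-characteristic recurrence of Theorem~\ref{thm:delete-contract} to a statement about reduced simplicial homology via a Mayer--Vietoris argument.

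For the base cases, one first disposes of graphs with loops: by Proposition~\ref{elser-complex-loops-and-parallels}\ref{elser-complex-loops}, $\Delta^G_U$ is then a cone and hence acyclic, so the conjecture holds vacuously. For trees, Proposition~\ref{prop:tree-euler} shows that $\Delta^T_U$ is either a cone, the irrelevant complex $\{\0\}$, or one of the $cK_2$ cases computed at~\eqref{K2:Delta}; since $|E(T)|-|V(T)|=-1$, direct inspection confirms concentration in the claimed dimensions. The delicate base case is $|U|=1$, where the conjecture strengthens the Euler-characteristic statement of Proposition~\ref{prop:euler-all}\ref{prop:euler-all:singleton} to full acyclicity of $\Delta^G_{\{v\}}$. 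This should follow from a discrete Morse argument: every face $A\in\Delta^G_{\{v\}}$ must leave at least one edge incident to $v$ outside $A$ (otherwise $v\notin V(G\sm A)$), so toggling the minimum such edge in a fixed total order on edges incident to $v$ is a natural candidate for an acyclic matching with a single critical cell.

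For the inductive step, assume $G$ has no loop and pick an edge $e=xy$ that is not a cut-edge (if no such edge exists, $G$ is a tree). Decompose $\Delta^G_U = \mathrm{star}(e)\cup\mathrm{del}(e)$, where $\mathrm{star}(e)=\{A\in\Delta^G_U:e\in A\}$ is a cone with cone point $e$, hence contractible, and $\mathrm{del}(e)=\{A\in\Delta^G_U:e\notin A\}$; their intersection is $\mathrm{link}(e)$. The resulting reduced Mayer--Vietoris sequence takes the form
\[
\cdots \to \tilde H_n(\mathrm{link}(e)) \to \tilde H_n(\mathrm{del}(e)) \to \tilde H_n(\Delta^G_U) \to \tilde H_{n-1}(\mathrm{link}(e)) \to \cdots.
\]
Tracing through the bijection $\psi_e$ from Section~\ref{section:deletion-contraction} identifies $\mathrm{link}(e)=\Delta^{G\sm e}_U\sm\mathcal{E}$ and $\mathrm{del}(e)=\Delta^{G/e}_{U/e}\sm\mathcal{E}$, where $\mathcal{E}$ is the common exceptional subcomplex characterized in~\eqref{psi:5}. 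A dimension check shows $|E(G/e)|-|V(G/e)|=|E(G)|-|V(G)|$ and $|E(G\sm e)|-|V(G\sm e)|=|E(G)|-|V(G)|-1$, so if $\mathcal{E}$ can be ignored the inductive homology concentration of $\Delta^{G/e}_{U/e}$ and $\Delta^{G\sm e}_U$ feeds through the degree shift in the sequence to give the predicted concentration for $\Delta^G_U$.

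The principal obstacle is controlling $\mathcal{E}$. Inspecting the characterization in~\eqref{psi:5}, one sees that $\mathcal{E}=\emptyset$ whenever $x\in U$ or $y\in U$, and in that case the Mayer--Vietoris argument goes through cleanly and the induction closes. In general, $\mathcal{E}$ is a nontrivial subcomplex of both $\Delta^{G\sm e}_U$ and $\Delta^{G/e}_{U/e}$ consisting of nucleus-complements whose vertex set avoids both $x$ and $y$, and resolving the induction seems to require either a secondary Mayer--Vietoris triangle relating $\Delta^{G\sm e}_U$, $\mathrm{link}(e)$, and $\mathcal{E}$ (and symmetrically on the contraction side), or an independent inductive interpretation of $\mathcal{E}$---plausibly as a nucleus-like complex of the graph $G-\{x,y\}$ enriched with the boundary edges that are forced into every nucleus. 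Establishing that $\mathcal{E}$ itself has homology concentrated in a dimension compatible with both exact sequences is the crux of the argument, and is likely why the conjecture has not yet been verified.
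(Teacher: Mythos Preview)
This statement is stated as a \emph{conjecture} in the paper; there is no proof to compare against. The paper offers only partial evidence: computational verification for 2-connected graphs on at most six vertices, and an argument via Jonsson's theory of pseudo-independence complexes showing that $\Delta^G_U$ is always a PI complex over the cographic matroid $M^*(G)$---which forces vanishing of $\tilde H_k(\Delta^G_U;\R)$ below degree $|E(G)|-|V(G)|$---and is moreover an SPI complex, giving full homology concentration, whenever $U$ is a vertex cover. The paper explicitly notes that SPI can fail when $U$ is not a vertex cover, so that approach does not settle the general case either.

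Your outline is a genuinely different strategy (inductive Mayer--Vietoris rather than matroidal bounds), but it is likewise explicitly incomplete, and you correctly identify control of $\mathcal{E}$ as the missing ingredient. Two corrections are worth recording. First, $\mathcal{E}$ is \emph{not} a subcomplex of either $\Delta^{G\sm e}_U$ or $\Delta^{G/e}_{U/e}$: the characterization in the proof of Proposition~\ref{all-about-psi} shows that $B\in\mathcal{E}$ precisely when the complementary nucleus avoids both endpoints of $e$, a condition preserved under \emph{enlarging} $B$ within the ambient complex, not under shrinking it. The actual subcomplexes are $\mathrm{link}(e)=\Delta^{G\sm e}_U\sm\mathcal{E}$ and $\mathrm{del}(e)=\Delta^{G/e}_{U/e}\sm\mathcal{E}$, so any auxiliary long exact sequence must be organized around those inclusions instead. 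Second, your discrete-Morse sketch for $|U|=1$ does not work as stated: adding to $A$ the least $v$-incident edge outside $A$ can leave $\Delta^G_{\{v\}}$, since deleting that edge from the corresponding nucleus may disconnect it or remove $v$ from its vertex set, so there is no reason to expect a single critical cell. Acyclicity of $\Delta^G_{\{v\}}$ is a strict strengthening of Elser's $\tilde\chi=0$ result and is part of what the conjecture asserts; neither the paper nor your outline supplies an argument for it.
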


By Proposition~\ref{prop:join}, it is enough to prove the conjecture in the case that $G$ is 2-connected.  Using Sage, we have verified the conjecture computationally for all 2-connected graphs with 6 or fewer vertices

\begin{prob}
Compute the Betti numbers $\dim\HH_k(\Delta^G_U;\R)$ combinatorially for arbitrary $G,U,k$.
\end{prob}

A partial proof of Conjecture~\ref{homology-concentration} can be obtained using Jonsson's theory of \emph{pseudo-independence complexes}; we refer the reader to \cite[chapter~13]{Jonsson} for the relevant definitions and theorems.  In short, it can be shown that in all cases, the nucleus complex $\Delta^G_U$ is a \emph{pseudo-independence (PI) complex} over the matroid $M^*(G)$ for all $U$, and a \emph{strong pseudo-independence (SPI) complex} whenever $U$ is a vertex cover.  It follows that $\HH_k(\Delta^G_U;\R)=0$ for all $k<|E(G)|-|V(G)|$ and all $U\subseteq V(G)$, and for all $k\neq|E(G)|-|V(G)|$ when $U$ is a vertex cover.  However, if $U$ is not a vertex cover, then $\Delta^G_U$ sometimes fails to be SPI over $M^*(G)$.

In another direction, one can ask how Elser numbers depend on the graphic matroid $M(G)$.  Interestingly, while $\els_k(G)$ cannot be a matroid invariant for $k>1$ (since it is not constant on trees with the same number of edges, all of which have isomorphic graphic matroids), it turns out that $\els_0(G)$ is a matroid invariant for 2-connected graphs.  This fact can be proven using Whitney's characterization of graphic matroid isomorphism in terms of 2-switches \cite{Whitney}.  Even in light of the $k=0$ case of Theorem~\ref{thm:monotonic}, it is not clear whether $\els_0(G)$ can be obtained from the Tutte polynomial: it is negative on 2-connected graphs but zero on graphs with a cut-vertex (cf.\ Theorem~\ref{thm:strict-positivity}\ref{strict-pos:not2conn}), hence not multiplicative on direct sums.  For $k\geq 2$, $\els_k(G)$ is not a matroid invariant even for 2-connected graphs; for example, the 2-connected graphs $G_1$ and $G_2$ shown below have isomorphic graphic matroids, but $\els_2(G_1)=42$ and $\els_2(G_2)=44$.
\begin{center}
\begin{tikzpicture}
\newcommand{\vxsize}{.075}
\draw (0,0) -- (0,1) -- (1,1) -- (1,0) -- cycle  (0,0) -- (-.87,.5) -- (0,1)  (1,0) -- (1.87,.5) -- (1,1);
\foreach \x/\y in {0/0, 1/0, 0/1, 1/1, -.87/.5, 1.87/.5} \draw [fill=black] (\x,\y) circle(\vxsize);
\node at (.5,-.5) {$G_1$};
\begin{scope}[shift={(4,0)}]
\draw (0,0) -- (0,1) -- (1,1) -- (1,0) -- cycle  (0,0) -- (-.87,.5) -- (0,1) -- (.5,1.87) -- (1,1);
\foreach \x/\y in {0/0, 1/0, 0/1, 1/1, -.87/.5, .5/1.87} \draw [fill=black] (\x,\y) circle(\vxsize);
\node at (.5,-.5) {$G_2$};
\end{scope}
\begin{scope}[shift={(6.5,0)}]
\draw (0,0) -- (3,0) -- (3,1) -- (0,1) -- cycle (1,0)--(1,1) (2,0)--(2,1) (0,0)--(1,1) (2,0)--(3,1);
\foreach \x in {0,1,2,3} { \foreach \y in {0,1} \draw [fill=black] (\x,\y) circle(\vxsize); }
\node at (1.5,-.5) {$G_3$};
\node at (1,1.25) {\scriptsize$\mathsf{w}$};
\node at (1,-.25) {\scriptsize$\mathsf{x}$};
\node at (2,1.25) {\scriptsize$\mathsf{y}$};
\node at (2,-.25) {\scriptsize$\mathsf{z}$};
\end{scope}
\begin{scope}[shift={(11,0)}]
\draw (0,0) -- (1,0) -- (1,1) -- (0,1) -- cycle (2,0) -- (2,1) -- (3,1) -- (3,0) -- cycle (1,0)--(2,1) (1,1)--(2,0) (0,0)--(1,1) (2,0)--(3,1);
\foreach \x in {0,1,2,3} { \foreach \y in {0,1} \draw [fill=black] (\x,\y) circle(\vxsize); }
\node at (1.5,-.5) {$G_4$};
\node at (1,1.25) {\scriptsize$\mathsf{w}$};
\node at (1,-.25) {\scriptsize$\mathsf{x}$};
\node at (2,1.25) {\scriptsize$\mathsf{y}$};
\node at (2,-.25) {\scriptsize$\mathsf{z}$};
\end{scope}
\end{tikzpicture}
\end{center}
On the other hand, $M(G_3)\cong M(G_4)$, and $\els_k(G_3)=\els_k(G_4)$ for all $k\geq2$.  In general, if $G'$ is obtained from $G$ by replacing an edge cut $\{wy,xz\}$ with another edge cut $\{wz,xy\}$ (as for the pair $G_3,G_4$ above), then there is a bijection $\N(G)\to\N(G')$ that preserves vertex sets and edge set cardinalities, so $\tilde\chi(\Delta^G_U)=\tilde\chi(\Delta^{G'}_U)$ and $\els_k(G)=\els_k(G')$ for all $U$ and $k$.  It is possible that there are other special 2-switches with the same properties.

\section*{Acknowledgements}
 The authors thank Veit Elser for proposing the problem, Lou Billera for bringing it to our attention, and Vic Reiner for suggesting the topological approach. We thank the Graduate Research Workshop in Combinatorics for providing the platform for this collaboration as well as the Institute for Mathematics and its Applications and the Center for Graduate and Professional Diversity Initiatives at the University of Kentucky for further financial support.  In addition, we thank two anonymous referees for their careful reading and helpful suggestions.

\raggedright
\bibliographystyle{amsalpha}
\bibliography{biblio}
\end{document}